\documentclass{amsart}
\usepackage[utf8]{inputenc}
\usepackage{amsmath, amsthm, amssymb, amsfonts, mathtools, bbold, stmaryrd, latexsym, enumerate}    
\usepackage{wasysym}
\usepackage{dsfont}

\pdfoutput=1

\usepackage[dvipsnames]{xcolor}
\usepackage{stackengine}
\usepackage{float}
\usepackage{enumitem}
\usepackage{tikz}
\usepackage{comment}
\usepackage[linesnumbered,ruled,commentsnumbered]{algorithm2e}

\usepackage{MnSymbol} 
\usepackage{graphicx}
\usepackage{caption}
\usepackage[all]{xypic}
\usepackage{verbatim}
\usepackage{chemfig, chemnum}
\usepackage{subcaption}

\usepackage[colorlinks]{hyperref}
\hypersetup{
    colorlinks,
    linkcolor={BrickRed},
    citecolor={ForestGreen}
}
\usepackage[nameinlink]{cleveref}

\usepackage[parfill]{parskip}
\usepackage[margin=1in]{geometry}

\theoremstyle{plain}
\newtheorem{theorem}{Theorem}
\numberwithin{theorem}{section}

\newtheorem{lemma}[theorem]{Lemma}

\newtheorem{definition}[theorem]{Definition}

\theoremstyle{definition}
\newtheorem{remark}[theorem]{Remark}

\newtheorem{conjecture}[theorem]{Conjecture}



\newcommand{\R}{\mathbb{R}}

\newcommand{\CC}{\mathbb{C}}
\newcommand{\ZZ}{\mathbb{Z}}

\newcommand{\G}{\text{G}}
\newcommand{\Trop}{\text{Trop}}
\newcommand{\Dr}{\text{Dr}}

\title{The Chirotropical Grassmannian}

\author{Dario Antolini}
\address{%
	Dario Antolini\newline
	Dipartimento di Matematica, Università di Trento\newline
	Email: \href{mailto:dario.antolini-1@unitn.it}{dario.antolini-1@unitn.it}
}

\author{Nick Early}
\address{%
	Nick Early\newline
	Institute for Advanced Study, Princeton NJ\newline
	Email: \href{mailto:earlnick@ias.edu}{earlnick@ias.edu}
}
	
\begin{document}

\maketitle

\begin{abstract}
		\noindent
        Recent developments in particle physics have revealed deep connections between scattering amplitudes and tropical geometry. From the heart of this relationship emerged the chirotropical Grassmannian $\text{Trop}^\chi \G(k,n)$ and the chirotropical Dressian $\text{Dr}^\chi(k,n)$, polyhedral fans built from uniform realizable chirotopes that encode the combinatorial structure of Generalized Feynman Diagrams. We prove that $\text{Trop}^\chi \G(3,n) = \text{Dr}^\chi(3,n)$ for $n = 6,7,8$, and develop algorithms to compute these objects from their rays modulo lineality. Using these algorithms, we compute all chirotropical Grassmannians $\text{Trop}^\chi \G(3,n)$ for $n = 6,7,8$ across all isomorphism classes of chirotopes. We prove that each chirotopal configuration space $X^\chi(3,6)$ is diffeomorphic to a polytope and propose an associated canonical logarithmic differential form. Finally, we show that the equality between chirotropical Grassmannian and Dressian fails for $(k,n) = (4,8)$.
        
	\end{abstract}
	
	\section{Introduction}
        The interplay between tropical geometry and particle physics has recently unveiled fascinating connections, particularly in the study of scattering amplitudes. At the heart of this relationship lies the chirotropical Grassmannian $\text{Trop}^\chi \G(k,n)$ and the chirotropical Dressian $\text{Dr}^\chi(k,n)$, introduced by Cachazo, Zhang and one of the authors \cite{CEZ2024A2} in the context of generalized bi-adjoint scalar amplitudes.
        
        In tropical geometry, the connection starts with the introduction of the complex tropical Grassmannian $\Trop \ \G(k,n)$ by Speyer and Sturmfels \cite{speyersturmfels2004}.  In 2019, in the context of theoretical particle physics, Cachazo, Guevara, Mizera and one of the authors \cite{CEGM2019} observed an equivalence between $\Trop \ \G(2,n)$, the space of phylogenetic trees on $n$ leaves, with Feynman diagrams on $n$ massless particles in the $\phi^3$ theory.  This link enables the computation of bi-adjoint scalar amplitudes at tree level in $\phi^3$ theory using the maximal cones of the tropical Grassmannian $\Trop \ \G(2,n)$.  In particular, $\Trop \ \G(2,n)$ contains a subfan called the positive tropical Grassmannian, which is the building block for biadjoint scalar amplitudes.  On the other hand, the positive tropical Grassmannian $\Trop^+ \G(k,n)$ was introduced for general $(k,n)$ by Speyer and Williams in \cite{speyerwilliams2004}, and was shown to enjoy various remarkable properties.  Namely, it is determined by only the 3-term tropical Plücker relations \cite{speyerwilliamspos2020, AHL2020}, using that every positively oriented matroid is realizable \cite{ardilarinconwilliams}.  Moreover, $\text{Trop} \ \G(2,n)$ admits a uniform, $2^{n-3}$-fold covering with relabelings of $\text{Trop}^+\G(2,n)$.  That is, every maximal cone appears in exactly $2^{n-3}$ chirotropical Grassmannians, as one can see by counting automorphisms of cubic trees. This leads a partial decomposition of the bi-adjoint scalar amplitudes into partial amplitudes corresponding only to relabelings of the positive tropical Grassmannian $\Trop^+ \G(2,n)$.

	In recent work in theoretical physics \cite{CEZ2024A2,CEZ2024A4B}, Cachazo, Zhang and one of the authors pursued a deeper exploration of the \textit{real} tropical Grassmannian $\Trop^\R \G(k,n)$, motivated by the need to find a uniform covering of it that would be analogous to the covering of $\Trop^\R \G(2,n)$ with $2^{n-3}$ copies of the positive part; but this particular feature does not have an obvious answer when $k\ge 3$.  Including only relabelings of $\Trop^+ \G(k,n)$ leaves holes and more exotic pieces are needed to fill in the gaps.  It is the full covering that should reveal the deepest new physics in the story initiated in \cite{CEGM2019}.
    
    This led to the introduction of the {\em chirotropical Grassmannian} $\Trop^\chi \G(k,n)$, one for each realizable, uniform chirotope (or oriented matroid) $\chi$ of rank $k$. These polyhedral fans were shown to encode combinatorially certain Grothendieck residues arising in a formula in \cite{CEGM2019}. This gives rise to generalized biadjoint scalar amplitudes, which are computed directly from higher tropical Grassmannians $\Trop \ \G(k,n)$. In particular, it is shown that, in the case $k = 3$ and $n = 6, 7, 8$, the CEGM formula is equal to the Laplace transform of the chirotropical Grassmannian $\Trop^\chi \G(k,n)$.

    Motivated by these works, in this paper we address various questions regarding chirotropical Grassmannians. These include, in particular, their parameterization, their computation and their realizability. 
    
    The material is organized as follows. In \Cref{sec:prelim}, we give main definitions and formulate the theoretical framework for our computations of the chirotropical Dressian.  In \Cref{sec:realizability}, we characterize the maximal cones of the Dressian (\Cref{lem: full dressian cliques}) and of the chirotropical Dressian (\Cref{lem: full chirotropical dressian cliques}), determined by compatible and $\chi$-compatible rays respectively. 
    
    In \Cref{sec:computing}, we develop an efficient algorithm (\Cref{algorithm-dr}) to compute the Dressian $\text{Dr}(k,n)$ from its rays modulo lineality. Given this collection of rays, our algorithm can be used to compute all chirotropical Dressians $\text{Dr}^\chi(k,n)$ (\Cref{algorithm-chi}). \Cref{sec:3678} contains the proof of our main realizability result. This confirms a conjecture in \cite{CEZ2024A2}.
    
    \begin{theorem}
    	\label{thm:realizability-intro}
    	For $n=6,7,8$ and any uniform realizable chirotope $\chi \in \{\pm1\}^{\binom{n}{3}}$ of rank 3, the chirotropical Dressian is realizable: we have the equality of sets
    	\[
    	\Dr^\chi(3,n) = \Trop^\chi \G(3,n), \qquad n = 6, 7, 8.
    	\]
    \end{theorem}
    
    In the same section, we present the results of our computations of all of the chirotropical Grassmannians $\Trop^\chi \G(3,n)$ for $n = 6, 7, 8$ modulo lineality using \Cref{algorithm-chi}. In \Cref{sec:36polytopal}, we show that every $\Trop^\chi \G(3,6)$ is the normal fan of a polytope. We introduce a Zenodo page \cite{datachirotropicalization20241} with all the data from our computation and we explain how to use it in \Cref{sec:code}.  Finally, in \Cref{sec:futurework}, we exhibit a pair $(\chi,\pi)$ consisting of a chirotope $\chi \in \{\pm1\}^{\binom{8}{4}}$ and a non-realizable chirotropical Plücker vector $\pi \in \Dr^\chi(4,8)$. This proves that the chirotropical Dressian $\Dr^\chi(4,8)$ is strictly bigger than the chirotropical Grassmannian $\Trop^\chi \G(4,8)$.
		
	\section{Preliminaries}
        \label{sec:prelim}
	
	Let $\lbrack n\rbrack = \{1,\ldots, n\}$ and let $\binom{\lbrack n\rbrack}{k}$ be the collection of $k$-element subsets of $\lbrack n\rbrack$.
    
    Throughout this work, we denote by $\G(k,n)$ the complex Grassmannian. We consider its Plücker embedding as a subvariety of $\mathbb{P}^{\binom{n}{k} - 1}$ and we denote by $\Trop \ \G(k,n)$ its tropicalization in this embedding using the trivial valuation. The open Grassmannian $\G^\circ(k,n)$ is defined by:
	\[
	\G^\circ(k,n) = \Biggl\{ g \in \G(k,n) : \prod_{I \in \binom{[n]}{k}} p_I(g) \neq 0 \Biggr\},
	\]
	where $p_I(g)$ denotes the Plücker coordinate of $g$ of index $I \in \binom{[n]}{k}$. The Grassmannian $\G(k,n)$ comes with a natural right action by the $n$-dimensional complex torus $(\CC^\times)^n$. Quotienting by this action, we obtain
	\[
	X(k,n) = \G^\circ(k,n) / (\CC^{\times})^n
	\]
	which is the moduli space of $n$ distinct points in $\mathbb{P}^{k-1}$, modulo projective transformations. Its tropicalization is $\Trop \ X(k,n) = \Trop \ \G(k,n) / L_{k,n}$, where $L_{k,n}$ is the lineality space of $\Trop \ \G(k,n)$; it is obtained as the image of the linear map with coordinates $\pi_I= \sum_{i \in I} x_i$ for $x\in \mathbb{R}^n$.
    $\Trop \ X(k,n)$ is a pure $(k-1)(n-k-1)$-dimensional polyhedral fan.
	
	The real Grassmannian, denoted by $\G_\R(k,n)$, and its open part, denoted by $\G_\R^\circ(k,n)$, play also a key role in the paper. They provide the main ingredient for the chirotropical Grassmannian: realizable uniform chirotopes. These are also known in the literature as realizable uniform oriented matroids.  Modding out by the $n$-torus, they are realized by generic arrangements of $n$ hyperplanes in real projective space $\mathbb{P}^{k-1}$. Each of these arrangements determines a torus orbit in $\G_\R^\circ(k,n)$. The vector of signs of the Plücker coordinates of any point in the orbit gives the chirotope representation of the arrangement. For more information on oriented matroids and their chirotope representation, see \cite{bjorner}.
	
    In this paper, we start directly with the chirotope representation. We work with \textit{realizable}, \textit{uniform} chirotopes of rank $k$, that is, vectors of signs $\chi \in \{\pm1\}^{\binom{n}{k}}$ of Plücker coordinates of a point $g \in \G^\circ_\R(k,n)$:
    \[
        \chi_I = \text{sign} \ p_I(g) \qquad \forall I \in \binom{[n]}{k}.
    \]
    Two chirotopes are isomorphic if they are related by relabeling of the ground set $\lbrack n\rbrack$ via a permutation $\sigma \in S_n$:
\[
\chi_I \longmapsto \chi_{\sigma(I)}
\]
or by reorientation according to the action of a torus element $ t \in \{\pm1\}^n$:
\[
\chi_I \longmapsto \Biggl(\prod_{i\in I}t_i\Biggr) \chi_I.
\]
        We further denote by
        \[
            X^\chi(k,n) = \left\{g\in G_\mathbb{R}(k,n): \text{sign} \ p_J(g) = \chi_J \ \forall J \in \binom{[n]}{k} \right\} \big\slash (\mathbb{R}_{>0})^n
        \]
        the \textit{chirotopal configuration space}.  Note that due to the torus action, more than one chirotope determines the same $X^\chi(k,n)$.

        This allows us to classify chirotopes into isomorphism classes.  We will need the following enumeration of isomorphism classes of rank 3 chirotopes on 6, 7 and 8 elements. The references for these results are scattered over different articles, but they are organized systematically in \cite[Section 6.5]{Finschi2001}.
	
	\begin{theorem}
		There are 4, 11 and 135 isomorphism classes of rank 3 uniform realizable chirotopes on $n = 6, 7, 8$ elements respectively.
	\end{theorem}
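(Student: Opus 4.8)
The statement is an enumeration, so the plan is to reduce it to the classification of relabeling-and-reorientation classes of uniform rank $3$ oriented matroids and to dispatch realizability as a separate, known input. A uniform rank $3$ chirotope $\chi \in \{\pm1\}^{\binom{n}{3}}$ is the sign data $\chi_{ijk} = \operatorname{sign}\det[p_i,p_j,p_k]$ of a configuration of $n$ points in general position in $\mathbb{P}^2$, and the isomorphism relation fixed above --- relabeling by $\sigma \in S_n$ together with reorientation by $t \in \{\pm1\}^n$ --- is exactly the action of the hyperoctahedral (signed symmetric) group of order $2^n\,n!$. Observe that the global sign flip $\chi \mapsto -\chi$ already lies in this group: taking $t = (-1,\dots,-1)$ gives $\prod_{i\in I} t_i = (-1)^3 = -1$ for every triple $I$. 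So the quantity to be computed is the number of orbits of realizable uniform rank $3$ chirotopes under this group.

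First I would eliminate the realizability constraint. It is a classical fact in oriented matroid theory that every uniform rank $3$ oriented matroid on at most $8$ elements is realizable, the smallest non-realizable uniform rank $3$ examples first appearing at $n = 9$. Consequently, for $n = 6,7,8$ the realizable uniform rank $3$ chirotopes are \emph{all} the uniform rank $3$ chirotopes, and the count reduces to the purely combinatorial enumeration of their orbits.

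Next I would carry out the enumeration by incremental point insertion. Starting from the unique class on $3$ points, I would repeatedly insert one further point into each combinatorially distinct region of the arrangement dual to the current configuration, recording the resulting chirotope and collapsing duplicates within each orbit. Because this procedure never leaves the realizable locus, it automatically produces exactly the realizable order types, so for $n \le 8$ no separate realizability test is required; reducing modulo the symmetry group at each stage yields the totals $4$, $11$, $135$ for $n = 6,7,8$, in agreement with the systematic catalog of \cite[Section 6.5]{Finschi2001}. As an independent check, one can instead generate candidate sign vectors directly from the three-term Grassmann--Plücker sign relations, retain those satisfying the chirotope axioms, and verify that grouping them into orbits returns the same three numbers.

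The main obstacle is computational rather than conceptual: the ambient space $\{\pm1\}^{\binom{8}{3}}$ has $2^{56}$ elements, so a naive search over all sign vectors is infeasible, and one must either stay inside the realizable locus via insertion or prune aggressively using the Grassmann--Plücker relations during generation. The genuinely delicate step is the quotient by the symmetry group, whose order $2^n\,n!$ is large; deciding when two chirotopes are isomorphic demands a dependable canonical form --- for example the lexicographic minimum over all relabel-reorient operations --- and any flaw there would corrupt the count. Confirming that precisely $4$, $11$, and $135$ orbits survive, and re-establishing the $n \le 8$ realizability input, is where the care must be concentrated.
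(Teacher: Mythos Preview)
The paper does not give its own proof of this theorem: it is stated as a known enumeration and attributed to the literature, with the remark that the results ``are organized systematically in \cite[Section 6.5]{Finschi2001}'' and that the explicit classes are available from Finschi's catalog. Your proposal is therefore not competing against a proof in the paper but rather unpacking what lies behind that citation, and at that level it is correct: the reduction to counting relabel/reorient orbits of uniform rank~$3$ oriented matroids, the observation that the global sign flip is already the reorientation $t=(-1,\dots,-1)$ since the rank is odd, the classical stretchability fact that every uniform rank~$3$ oriented matroid on at most $8$ elements is realizable (so ``realizable'' is no restriction here), and the incremental single-element-extension generation with canonical-form isomorphism rejection are exactly the ingredients of the standard approach recorded in Finschi's thesis.

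The only caveat is that what you have written is a plan, not a proof: the numbers $4,11,135$ are ultimately the output of a nontrivial computation, and your text defers both the enumeration and the $n\le 8$ realizability input to the same references the paper cites. That is entirely appropriate for a result of this kind, but you should be explicit that the final verification rests on those computations rather than on an argument contained in your write-up.
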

	
	We remark that \cite[Chapter 6]{Finschi2001} explains how to compute these isomorphism classes. The data is available in an online catalog \cite{FinschiCatalog}.
	
	\begin{definition}[Chirotropical hypersurface]
		Let $\chi \in \{-1,+1\}^{\binom{n}{k}}$ be a chirotope. Let $g$ be a polynomial with real coefficients in the $\binom{n}{k}$ Plücker
  variables. Denote by $\text{supp}(g)$ the set of monomials appearing in $g$.  Let
		\begin{equation*}
			g^\chi_+ = \sum_{\substack{m \ \in \ \text{supp}(g) \\ \text{sign}(m(\chi)) = 1}} m, \qquad g^\chi_- = \sum_{\substack{m \ \in \ \text{supp}(g) \\ \text{sign}(m(\chi)) = -1}} m.
		\end{equation*}
		Denote by $V(g) \subset \mathbb{P}^{\binom{n}{k}-1}$ the hypersurface defined by the polynomial $g$.  The \emph{chirotropical hypersurface} $\Trop^\chi V(g)$ is the set of all points where the minimum in $\Trop(g)(x)$ is attained at least twice and at least once in both $\Trop(g^\chi_+)(x)$ and $\Trop(g^\chi_{-})(x)$.  More formally,
		\begin{equation}
			\label{eqn:min+min-}
			\Trop^\chi V(g) = \Big\{ x \in \R^{\binom{n}{k}} \ \Big \vert \ \Trop(g^\chi_+)(x) = \Trop(g^\chi_{-})(x) \Big\}.
		\end{equation}
	\end{definition}

	\begin{definition}[Dressian and chirotropical Dressian \cite{CEZ2024A2}]
	Let $n, k$ be integers, $1 \leq k \leq n$. A vector $\pi \in \R^{\binom{n}{k}}$ is said to satisfy the \emph{tropical Plücker relations} if the minimum value of
    \begin{equation}
        \{\pi_{Lab} + \pi_{Lcd}, \pi_{Lac} + \pi_{Lbd}, \pi_{Lad}+  \pi_{Lbc}\}    
    \end{equation}
    is achieved at least twice, for all $L \in \binom{[n]}{k-2}$ and $\{a,b,c,d\} \in \binom{[n]}{4}$ such that $L \cap \{a, b, c, d\} = \emptyset$. Such a vector is called a \emph{tropical Plücker vector}. The Dressian $\Dr(k,n)$ is the set of all tropical Plücker vectors $\pi \in \R^{\binom{n}{k}}$. Equivalently, let $P_{k,n}$ be the collection of the 3-term Plücker relations of $\G(k,n)$. Then:
        \begin{equation}
            \Dr(k,n) = \bigcap_{g \in P_{k,n}} \Trop \ V(g).   
        \end{equation}
        Let $\chi \in \{\pm1\}^{\binom{n}{k}}$ be a uniform realizable chirotope. An element $\pi \in \mathbb{R}^{\binom{n}{k}}$ is said to satisfy the $\chi$\emph{-tropical Plücker relations} provided that, whenever
		\begin{equation}\label{eq: chirotope relations}
			\chi_{Lab}\chi_{Lcd} = \chi_{Lad}\chi_{Lbc} = \chi_{Lac}\chi_{Lbd}
		\end{equation}	
		we have
		\begin{equation}
			\label{eqn:chirotropicalpluecker}
			\pi_{Lac} + \pi_{Lbd} = \min\{\pi_{Lab} + \pi_{Lcd},\pi_{Lad} + \pi_{Lbc}\}
		\end{equation}
        for all $L \in \binom{[n]}{k-2}$ and $\{a,b,c,d\} \in \binom{[n]}{4}$ such that $L \cap \{a, b, c, d\} = \emptyset$.
		Such an element $\pi \in \R^{\binom{n}{k}}$ is called a $\chi$\emph{-tropical Plücker vector}. The set of all such vectors is the \emph{chirotropical Dressian}, denoted by:
		\[
		\Dr^\chi(k,n) = \Bigl\{ \pi \in \mathbb{R}^{\binom{n}{k}} : \pi \text{ is a } \chi\text{-tropical Plücker vector}\Bigr\}.
		\]
	Equivalently, with the same notations as above:
		\begin{equation}
			\label{eqn:pluecker}
			\Dr^\chi(k,n) = \bigcap_{g \in P_{k,n}} \Trop^\chi V(g),
		\end{equation}
	\end{definition}
		
	\begin{definition}[Chirotropical Grassmannian and moduli space \cite{CEZ2024A2}]
		 Let $n,k$ be integers, $1\leq k \leq n$ and let $\chi \in \{\pm1\}^{\binom{n}{k}}$ be a uniform realizable chirotope. Denote by $I_{k,n} \subset \ZZ\Bigl[p_I : I \in \binom{[n]}{k}\Bigr]$ the Plücker ideal of the Grassmannian $\G(k,n)$. The \emph{chirotropical Grassmannian} is defined by:
		\begin{equation}
			\Trop^\chi \G(k,n) = \bigcap_{g \in I_{k,n}} \Trop^\chi V(g).
		\end{equation}
		Recall that $L_{k,n}$ denotes the lineality space of $\Trop \ \G(k,n)$. We define the \emph{chirotropical moduli space} as:
		\[
		\Trop^\chi X(k,n) = \Trop^\chi \G(k,n) / L_{k,n}.
		\]
	\end{definition}

    \begin{remark}
    \label{note:+}
        If we consider the totally positive chirotope $+ = (1, \ldots, 1) \in \binom{n}{k}$, then the chirotropical Grassmannian $\Trop^+ \G(k,n)$ is exactly the tropical totally positive Grassmannian as defined in \cite{speyerwilliams2004}.
    \end{remark}

 \section{Maximal cones of Dressians and Chirotropical Dressians}
 \label{sec:realizability}

In this section, we show that the coarsest fan structure on the Dressian, as inherited from subdivisions of $\Delta_{k,n}$, is completely characterized by its rays.  In particular, the result holds for the chirotropical Dressian. 
 This provides the main technical tool that makes \Cref{algorithm-dr} and \Cref{algorithm-chi} work. We begin with the following lemma.

	\begin{lemma}
		\label{lemma:computationfulldr}
		Suppose that $\Pi = \{\pi_1,\ldots, \pi_d\} \subset \Dr(k,n)$ is a collection of tropical Plücker vectors such that each sum $\pi_i + \pi_j$ is again a tropical Plücker vector, that is, $\pi_i + \pi_j \in \Dr(k,n)$ for all $i,j \in [d]$. Then the cone
		\[
			\text{cone}(\Pi) = \{ c_1 \pi_1 + \cdots + c_d \pi_d : c_1, \ldots, c_d \geq 0\}
		\]
		is contained in the Dressian $\Dr(k,n)$.
	\end{lemma}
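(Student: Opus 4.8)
The plan is to reduce to a single three-term relation and then exploit the \emph{additivity} of the three linear forms occurring in it. Recall that $\Dr(k,n)$ consists of those $\pi$ for which every three-term tropical Pl\"ucker relation holds, i.e.\ for which the relevant minimum is attained at least twice; equivalently $\Dr(k,n) = \bigcap_{g \in P_{k,n}} \Trop V(g)$. Since membership $\text{cone}(\Pi) \subseteq \Dr(k,n)$ is equivalent to every element of the cone lying in each $\Trop V(g)$, it suffices to fix one such relation, indexed by some $L$ and $\{a,b,c,d\}$, and show that an arbitrary nonnegative combination $\pi = \lambda_1\pi_1 + \cdots + \lambda_d\pi_d$ (with $\lambda_\ell \ge 0$) satisfies it. I would introduce the three linear functionals
\[
A(\pi) = \pi_{Lab} + \pi_{Lcd}, \qquad B(\pi) = \pi_{Lac} + \pi_{Lbd}, \qquad C(\pi) = \pi_{Lad} + \pi_{Lbc},
\]
so that satisfying this relation means precisely that the two smallest of $A(\pi), B(\pi), C(\pi)$ coincide. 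The essential structural fact is that $A, B, C$ are \emph{linear}, hence additive: $A(\pi) = \sum_\ell \lambda_\ell A(\pi_\ell)$, and similarly for $B$ and $C$.

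Next I would classify each generator $\pi_\ell$ by its behaviour in this fixed relation. Since $\pi_\ell \in \Dr(k,n)$, two of the values $A(\pi_\ell), B(\pi_\ell), C(\pi_\ell)$ are equal and minimal; call $\pi_\ell$ \emph{balanced} if all three coincide, and otherwise record the unique coordinate strictly exceeding the repeated minimum as its \emph{type} $\in \{A, B, C\}$. The key step --- and the only place the pairwise hypothesis enters --- is to show that no two generators can have different types. Suppose toward a contradiction that $\pi_i$ has type $A$ and $\pi_j$ has type $B$, so that $B(\pi_i) = C(\pi_i) < A(\pi_i)$ and $A(\pi_j) = C(\pi_j) < B(\pi_j)$. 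The coordinate $C$ is minimal for \emph{both} generators, so by additivity $C(\pi_i + \pi_j) < A(\pi_i + \pi_j)$ and $C(\pi_i + \pi_j) < B(\pi_i + \pi_j)$; thus the minimum of the three is attained only once and $\pi_i + \pi_j \notin \Trop V(g)$, contradicting $\pi_i + \pi_j \in \Dr(k,n)$. By the symmetry of the three terms, the same one-line computation rules out every pair of distinct types, so all non-balanced generators share a single type.

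Finally I would assemble the conclusion. If all generators are balanced the claim is immediate, since then $A(\pi) = B(\pi) = C(\pi)$ for every conic combination. Otherwise all non-balanced generators carry a common type, say $A$, and then \emph{every} generator --- balanced or not --- satisfies $B(\pi_\ell) = C(\pi_\ell) \le A(\pi_\ell)$. Multiplying by $\lambda_\ell \ge 0$ and summing, additivity yields $B(\pi) = C(\pi) \le A(\pi)$, so the minimum of $A(\pi), B(\pi), C(\pi)$ is attained at least twice and $\pi$ satisfies the relation. As the relation was arbitrary, $\pi \in \Dr(k,n)$, giving $\text{cone}(\Pi) \subseteq \Dr(k,n)$.

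I expect the type-incompatibility step to be the conceptual crux: it is where the hypothesis ``every pairwise sum is again a tropical Pl\"ucker vector'' does its work, converting a \emph{non-convex} condition (the minimum of three linear forms attained twice) into one that is stable under nonnegative combinations. The computation itself is short; the main obstacle is really isolating the right invariant --- a single strictly-maximal coordinate per relation --- after which the linearity of $A, B, C$ makes the passage from pairwise sums to arbitrary conic combinations automatic.
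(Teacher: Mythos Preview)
Your proof is correct and rests on the same underlying observation as the paper's --- at each three-term relation there are three possible ``behaviors'' (your types, the paper's three $2$-splits of the octahedral face $F_{L;\{a,b,c,d\}}$), and the pairwise hypothesis forces all generators to share one behavior --- but your execution is genuinely more elementary. The paper first reduces to the unweighted sum $\pi_1+\cdots+\pi_d$ by invoking the bijection between Dressian cones and regular matroid subdivisions of $\Delta_{k,n}$ (scaling a height function leaves the subdivision unchanged, and sums correspond to common refinements), and then argues by contradiction that a failed relation on the sum would exhibit two incompatible $2$-splits of an octahedron, hence a bad pair $\pi_i,\pi_j$. You instead work directly with arbitrary coefficients $\lambda_\ell\ge 0$ and the three linear functionals $A,B,C$, so no appeal to subdivisions, refinements, or octahedra is needed at all; the additivity of $A,B,C$ does the work that common refinement does in the paper. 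What the paper's route buys is the geometric picture that motivates the later chirotropical version (Lemma~\ref{lemma:2outof3} and Figure~2); what your route buys is a self-contained argument that does not rely on the subdivision machinery and handles the scalars $\lambda_\ell$ in one stroke rather than via a separate reduction.
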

	
	In our proof, we rely on the bijection between (relative interiors of) cones in the Dressian $\Dr(k,n)$ and regular matroid subdivisions of the hypersimplex
	\begin{eqnarray}
		\Delta_{k,n} & = & \left\{x\in \lbrack 0,1\rbrack^n: \sum_{j=1}^n x_j =k \right\}
	\end{eqnarray}
	with vertices $\sum_{j \in J} e_j$ for any $k$-subset $J \subset [n]$.  See, for example, \cite{GGMS} and \cite{sturmfelsmaclagan2015}. Moreover, we recall that each hypersimplex $\Delta_{k,n}$ has octahedral faces $F_{L; \{a,b,c,d\}}$ for each $L \in \binom{[n]}{k-2}$ and $\{a, b, c, d\} \in \binom{[n]}{4}$ such that $L \cap \{a,b,c,d\} = \emptyset$.

    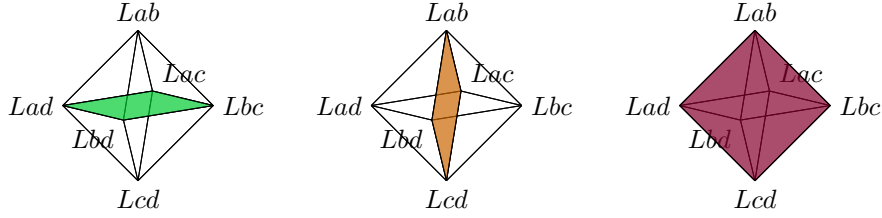
\begin{figure}
    \centering
    \begin{tikzpicture}[
    line join=bevel,
    z=-5.5, 
    ]
\coordinate (A1) at (0,0,-1);
\node at (A1) [above right] {$Lac$}; 
\coordinate (A2) at (-1,0,0);
\node at (A2) [left] {$Lad$}; 
\coordinate (A3) at (0,0,1);
\node at (A3) [below left] {$Lbd$}; 
\coordinate (A4) at (1,0,0);
\node at (A4) [right] {$Lbc$};
\coordinate (B1) at (0,1,0);
\node at (B1) [above] {$Lab$};
\coordinate (C1) at (0,-1,0);
\node at (C1) [below] {$Lcd$};

\draw (A1) -- (A2) -- (B1) -- cycle;
\draw (A4) -- (A1) -- (B1) -- cycle;
\draw (A1) -- (A2) -- (C1) -- cycle;
\draw (A4) -- (A1) -- (C1) -- cycle;
\draw (A2) -- (A3) -- (B1) -- cycle;
\draw (A3) -- (A4) -- (B1) -- cycle;
\draw (A2) -- (A3) -- (C1) -- cycle;
\draw (A3) -- (A4) -- (C1) -- cycle;
\draw [fill opacity=0.7,fill=green!80!blue] (A1) -- (A2) -- (A3) -- (A4) -- cycle;
\end{tikzpicture} \quad 
        \begin{tikzpicture}[
        line join=bevel,
        z=-5.5, 
        ]
\coordinate (A1) at (0,0,-1);
\node at (A1) [above right] {$Lac$}; 
\coordinate (A2) at (-1,0,0);
\node at (A2) [left] {$Lad$}; 
\coordinate (A3) at (0,0,1);
\node at (A3) [below left] {$Lbd$}; 
\coordinate (A4) at (1,0,0);
\node at (A4) [right] {$Lbc$};
\coordinate (B1) at (0,1,0);
\node at (B1) [above] {$Lab$};
\coordinate (C1) at (0,-1,0);
\node at (C1) [below] {$Lcd$};

\draw (A1) -- (A2) -- (B1) -- cycle;
\draw (A4) -- (A1) -- (B1) -- cycle;
\draw (A1) -- (A2) -- (C1) -- cycle;
\draw (A4) -- (A1) -- (C1) -- cycle;
\draw (A2) -- (A3) -- (B1) -- cycle;
\draw (A3) -- (A4) -- (B1) -- cycle;
\draw (A2) -- (A3) -- (C1) -- cycle;
\draw (A3) -- (A4) -- (C1) -- cycle;
\draw [fill opacity=0.7,fill=orange!80!black] (A1) -- (B1) -- (A3) -- (C1) -- cycle;
\end{tikzpicture} \quad 
    \begin{tikzpicture}[
    line join=bevel,
    z=-5.5, 
    ]
\coordinate (A1) at (0,0,-1);
\node at (A1) [above right] {$Lac$}; 
\coordinate (A2) at (-1,0,0);
\node at (A2) [left] {$Lad$}; 
\coordinate (A3) at (0,0,1);
\node at (A3) [below left] {$Lbd$}; 
\coordinate (A4) at (1,0,0);
\node at (A4) [right] {$Lbc$};
\coordinate (B1) at (0,1,0);
\node at (B1) [above] {$Lab$};
\coordinate (C1) at (0,-1,0);
\node at (C1) [below] {$Lcd$};

\draw (A1) -- (A2) -- (B1) -- cycle;
\draw (A4) -- (A1) -- (B1) -- cycle;
\draw (A1) -- (A2) -- (C1) -- cycle;
\draw (A4) -- (A1) -- (C1) -- cycle;
\draw (A2) -- (A3) -- (B1) -- cycle;
\draw (A3) -- (A4) -- (B1) -- cycle;
\draw (A2) -- (A3) -- (C1) -- cycle;
\draw (A3) -- (A4) -- (C1) -- cycle;
\draw [fill opacity=0.7,fill=purple!70!black] (A2) -- (B1) -- (A4) -- (C1) -- cycle;
\end{tikzpicture}
\caption{The octahedral face $F_{L; \{a,b,c,d\}}$ of $\Delta_{k,n}$ and its matroid subdivisions, called \emph{2-splits}. The picture on the right is taken by seeing the octahedron from below.}
\end{figure}
	
	\begin{proof}
		We need to prove that every positive linear combination $c_1 \pi_1 + \cdots + c_d \pi_d$ is a tropical Plücker vector. Actually, it is sufficient to prove that $\pi_1 + \cdots + \pi_d \in \Dr(k,n)$. In fact, the tropical Plücker vectors $\pi_i$ and $c_i \pi_i$ induce the same subdivision of $\Delta_{k,n}$, and since the subdvision induced by $\pi_i + \pi_j$ is the common refinement of the subdivision induced by $\pi_i$ and the subdivision induced by $\pi_j$, the fact that $\pi_i + \pi_j$ induces a matroid subdivision implies that $c_i \pi_i + c_j \pi_j$ induces a matroid subdivision, i.e.\ $c_i \pi_i + c_j \pi_j \in \Dr(k,n)$ for all $i, j \in [n]$.\\
        Now, suppose by contradiction that $\pi = \pi_1+\cdots +\pi_d$ is not in $\text{Dr}(k,n)$; this means that at least one three-term tropical Plücker relation is not satisfied, i.e.\ there exists $L,\{a,b,c,d\}$ such that the minimum among:
		\[
		\pi_{Lab} + \pi_{Lcd},\ \pi_{Lac} + \pi_{Lbd},\ \pi_{Lad} + \pi_{Lbc}
		\]
        is achieved exactly once. Hence, the octahedral face of $\Delta_{k,n}$ defined by $x_\ell = 1$ for all $\ell\in L$ and $x_a+x_b+x_c+x_d = 2$ is subdivided by two incompatible two-splits.  But each $\pi_1,\ldots, \pi_d$ induces a two-split, so there exists a pair $\pi_i,\pi_j$ such that $\pi_i+\pi_j$ induces a non-matroidal subdivision of that octahedron, i.e.\ $\pi_i + \pi_j \not \in \Dr(k,n)$, a contradiction.
	\end{proof}

	A direct consequence of the previous result is the following characterization of the cones of the Dressian, which is essential for computations.

	\begin{theorem}
		\label{lem: full dressian cliques}
		Let $R_{k,n}$ be the set of (primitive integer) generators of the rays of the Dressian $\Dr(k,n)$ modulo lineality. Given any maximal-by-inclusion subcollection $\{\pi_1,\ldots, \pi_d\}$ of $R_{k,n}$ consisting of pairwise compatible tropical Plücker vectors, i.e.\ $\pi_i + \pi_j \in \Dr(k,n)$ for all $i,j \in [d]$, then their conical hull: 
		\[
		\text{cone}(\pi_1, \ldots, \pi_d) = \left\{t_1\pi_1:\cdots + t_d\pi_d \in \R^{\binom{n}{k}}: t_1, \ldots, t_d\geq 0 \right\}
		\]
		is a maximal cone in $\Dr(k,n)$.  Moreover, any maximal cone of $\Dr(k,n)$ is characterized uniquely by such a collection.
	\end{theorem}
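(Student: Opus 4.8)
The plan is to run the statement through the standard dictionary between cones of $\Dr(k,n)$ and regular matroid subdivisions of the hypersimplex $\Delta_{k,n}$. Recall that $\Dr(k,n)$ is the subfan of the secondary fan of $\Delta_{k,n}$ whose cones are those inducing matroid subdivisions: the relative interior of a cone $\tau$ consists of the weight vectors inducing one fixed subdivision $\mathcal S_\tau$, the face relation corresponds to coarsening, and the rays $R_{k,n}$ correspond to the coarsest nontrivial matroid subdivisions. In particular the extreme rays of a cone $\tau=\overline{C(\mathcal S_\tau)}$ are exactly the coarsest nontrivial coarsenings of $\mathcal S_\tau$, and any two rays of $\tau$ span a two-dimensional face of $\tau$, which again lies in $\Dr(k,n)$; hence the rays of any cone of $\Dr(k,n)$ are automatically pairwise compatible. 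I would prove the two implications separately and read off uniqueness at the end.

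For the forward direction, let $\{\pi_1,\dots,\pi_d\}$ be a maximal pairwise-compatible subcollection of $R_{k,n}$. By \Cref{lemma:computationfulldr} the cone $\sigma=\text{cone}(\pi_1,\dots,\pi_d)$ is contained in $\Dr(k,n)$, so the real task is to show it is a single maximal cone of the fan. I would first form the candidate subdivision $\mathcal S:=\bigwedge_i \mathcal S_{\pi_i}$, the common refinement, and verify it is matroidal by a local argument on each octahedral face $F_{L;\{a,b,c,d\}}$: there each $\mathcal S_{\pi_i}$ restricts to the trivial subdivision or to one of the three $2$-splits, and if two restrictions were distinct $2$-splits then $\pi_i+\pi_j$ would violate the three-term relation on $F$, contradicting compatibility; thus all nontrivial restrictions coincide and $\mathcal S|_F$ is a single $2$-split or trivial, hence matroidal. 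Since a weight vector lies in $\Dr(k,n)$ exactly when the three-term relation holds on every octahedral face, any weight realizing the (regular) common refinement $\mathcal S$ lies in $\Dr(k,n)$, so $\tau:=\overline{C(\mathcal S)}$ is a genuine cone of the fan. Because each $\mathcal S_{\pi_i}$ is a coarsening of $\mathcal S$, each $\pi_i$ lies in $\tau$ and is in fact one of its rays, giving $\sigma\subseteq\tau$. Conversely, any ray $\rho$ of $\tau$ spans a two-dimensional face of $\tau$ with each $\pi_i$, so $\rho+\pi_i\in\Dr(k,n)$ for all $i$, and maximality of the clique forces $\rho\in\{\pi_1,\dots,\pi_d\}$; hence $\tau\subseteq\sigma$ and $\sigma=\tau$. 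Running the identical two-face-plus-maximality argument on any cone $\tau'\supsetneq\tau$ produces a ray of $\tau'$ compatible with every $\pi_i$ yet outside the clique, a contradiction, so $\sigma=\tau$ is maximal.

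For the converse, given a maximal cone $\tau$ of $\Dr(k,n)$ its extreme rays $\{\pi_1,\dots,\pi_d\}\subseteq R_{k,n}$ are pairwise compatible by the face observation above and $\tau=\text{cone}(\pi_1,\dots,\pi_d)$. This clique is maximal: any $\rho\in R_{k,n}$ compatible with all $\pi_i$ can be completed to a maximal clique whose cone, by the forward direction, is a maximal cone containing $\tau$, and were $\rho\notin\{\pi_i\}$ this containment would be strict, contradicting maximality of $\tau$. Uniqueness is then immediate, since the clique is precisely the set of extreme rays of $\tau$ (determined by $\tau$) while the forward direction shows the clique determines $\tau=\text{cone}(\pi_1,\dots,\pi_d)$. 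I expect the main obstacle to be the forward direction's claim that $\sigma$ is an honest face of the fan rather than a union of several cones: a priori the conical hull of pairwise-compatible rays can cut across the fan, since a positive combination need not induce the common refinement, and it is exactly the maximality of the clique, combined with the local octahedral analysis that identifies $\mathcal S$ together with its coarsest coarsenings, that rules this out.
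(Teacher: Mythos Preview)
Your argument is correct and follows the same route as the paper: invoke \Cref{lemma:computationfulldr} to place the conical hull inside $\Dr(k,n)$, then use the dictionary between cones and regular matroid subdivisions of $\Delta_{k,n}$ together with maximality of the clique. In fact you supply several steps the paper's two-sentence proof leaves implicit, namely that $\sigma$ coincides with a single secondary cone $\tau=\overline{C(\mathcal S)}$ rather than merely lying in $\Dr(k,n)$, that $\tau$ is maximal, and that the ray set of a maximal cone is a \emph{maximal} clique; one cosmetic point is that two rays of a cone $\tau$ need not span a two-dimensional \emph{face} of $\tau$ when $\tau$ is not simplicial, but your conclusion $\rho+\pi_i\in\tau\subseteq\Dr(k,n)$ only uses convexity of $\tau$, so nothing is lost.
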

	\begin{proof}
     	By \Cref{lemma:computationfulldr}, we already know that a cone as above is a maximal cone of $\Dr(k,n)$. Vice-versa, given a maximal cone $C \subset \Dr(k,n)$, then (the primitive integer generators of) its rays are uniquely determined up to lineality, and any non-negative linear combination of any subset of them is a tropical Plücker vector in $C$.  In particular, they are pairwise compatible and we have our desired collection.
	\end{proof}
    
	The situation is analogous for chirotropical Dressians, except for the following result.
    
	\begin{lemma}
	\label{lemma:2outof3}
		Fix a chirotope $\chi$ and any octahedral face $F_{L,\{a,b,c,d\}}$ of $\Delta_{k,n}$.  Then, exactly two out of the three possible splits of $F_{L,\{a,b,c,d\}}$ are compatible with $\chi$.
	\end{lemma}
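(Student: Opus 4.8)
The plan is to reduce the statement to the single three-term Plücker relation attached to the octahedron and to read off the $\chi$-compatible splits directly from the sign data of $\chi$. First I would fix the relation $g = p_{Lab}p_{Lcd} - p_{Lac}p_{Lbd} + p_{Lad}p_{Lbc}$, whose three monomials $m_1, m_2, m_3$ are indexed by the three pairings of $\{a,b,c,d\}$, equivalently by the three pairs of opposite vertices of the octahedron $F_{L,\{a,b,c,d\}}$. Writing $T_1 = \pi_{Lab}+\pi_{Lcd}$, $T_2 = \pi_{Lac}+\pi_{Lbd}$, $T_3 = \pi_{Lad}+\pi_{Lbc}$, I would recall the standard dictionary between the regular $2$-splits pictured in the figure and these sums: the split that separates a pair of opposite vertices $\{P,Q\}$ (the two apexes of the resulting pyramids, omitted from the equatorial square) is exactly the one on whose relative-interior cone the sum $\pi_P + \pi_Q$ is the strict maximum among $T_1, T_2, T_3$, while the other two sums tie for the minimum.

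The second step uses realizability to identify $g^\chi_+$ and $g^\chi_-$. Because $\chi$ is realizable, I would pick a real representative with the prescribed Plücker signs; evaluating $g$ there gives $m_1 + m_2 + m_3 = 0$ with all three terms nonzero by uniformity. Three nonzero reals summing to zero cannot share a common sign, so in the partition $g = g^\chi_+ + g^\chi_-$ one part consists of a single monomial $m_{i_0}$ and the other of the remaining two. Hence the defining equation \eqref{eqn:min+min-} of $\Trop^\chi V(g)$ collapses to $T_{i_0} = \min\{T_{i_1}, T_{i_2}\}$: the \emph{isolated} sum equals the minimum of the other two. After relabeling so that $i_0$ is the $\{ac,bd\}$-pairing this is precisely \eqref{eqn:chirotropicalpluecker}, and condition \eqref{eq: chirotope relations} is exactly the assertion that $\{ac,bd\}$ is the isolated pairing.

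The final step is the comparison. The split whose apex pair is the isolated pair lives on the cone where $T_{i_0}$ is the strict maximum, which directly contradicts $T_{i_0} = \min\{T_{i_1}, T_{i_2}\}$ (that would force all three sums equal, against strictness); so this split is not $\chi$-compatible. For each of the other two splits, on the relative interior of its cone the isolated sum $T_{i_0}$ is tied for the minimum, so $T_{i_0} = \min\{T_{i_1}, T_{i_2}\}$ holds identically there; the whole relative-interior cone then lies in $\Dr^\chi(k,n)$ restricted to the octahedron, and the split is $\chi$-compatible. This yields exactly two compatible splits.

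I expect the only real obstacle to be bookkeeping rather than mathematics: getting the signs in the Plücker relation right and fixing the min-convention dictionary so that the incompatible split is correctly matched with the isolated monomial. Once that dictionary is pinned down, the ``two out of three'' count is simply ``three minus the one split that separates the distinguished pair,'' and the realizability input is used only to guarantee that this distinguished (isolated) monomial exists and is unique.
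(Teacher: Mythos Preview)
Your proposal is correct and follows essentially the same route as the paper: both reduce the claim to the single three-term $\chi$-tropical Pl\"ucker relation \eqref{eqn:chirotropicalpluecker} attached to the octahedron and read off the two admissible splits from which pairing is singled out by the sign condition \eqref{eq: chirotope relations}. The paper's proof is a one-line pointer to that relation, whereas you additionally spell out (i) the realizability argument showing that exactly one monomial is isolated in $g^\chi_\pm$ and (ii) the split--to--strict-maximum dictionary; these are the details the paper leaves implicit.
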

	\begin{proof}
		This is a direct translation of the chirotropical Plücker relation on the index set $(L; \{a,b,c,d\})$:
		\begin{eqnarray}
			\pi_{Lac} + \pi_{Lbd} & = & \min\{\pi_{Lab} + \pi_{Lcd},\pi_{Lad} + \pi_{Lbc}\}.
		\end{eqnarray}
		These subdivisions are determined by the hyperplanes $x_a+x_b=1$ and $x_a+x_d=1$ with $x_\ell = 1$ for all $\ell \in L$. 
	\end{proof}
	This result implies that non-negative linear combinations of pairwise compatible chirotropical Plücker vectors are again chirotropical Plücker vectors. A direct consequence is the following characterization of the maximal cones of the chirotropical Dressian, important for computations.
	\begin{theorem}
		\label{lem: full chirotropical dressian cliques}
		Let $\chi \in \{ \pm 1 \}^{\binom{n}{k}}$ be a realizable, uniform chirotope and denote by $R^\chi_{k,n}$ the set of (primitive integer) generators of rays of the chirotropical Dressian $\Dr^\chi(k,n)$. Given any maximal-by-inclusion subcollection $\{\pi_1,\ldots, \pi_d\}$ of $R^\chi_{k,n}$ consisting of pairwise compatible $\chi$-tropical Plücker vectors, i.e.\ $\pi_i + \pi_j \in \Dr^\chi(k,n)$ for all $i, j \in [d]$, then their conical hull
		\[
		\text{cone}(\pi_1, \ldots, \pi_d) = \left\{t_1\pi_1:\cdots + t_d\pi_d \in \R^{\binom{n}{k}}: t_1, \ldots, t_d\geq 0 \right\}
		\]
            is a maximal cone in $\Dr^\chi (k,n)$.  Moreover, any maximal cone of $\Dr^\chi (k,n)$ is characterized uniquely by such a collection.
	\end{theorem}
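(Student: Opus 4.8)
The plan is to establish this statement exactly as its non-oriented counterpart \Cref{lem: full dressian cliques} was obtained from \Cref{lemma:computationfulldr}: first I would prove a chirotropical cone lemma asserting that pairwise $\chi$-compatibility of a finite family forces the entire conical hull into $\Dr^\chi(k,n)$, and then run a purely formal clique-versus-cone argument to extract maximality and uniqueness. Throughout I would use that $\Dr^\chi(k,n)\subseteq\Dr(k,n)$: relation \eqref{eqn:chirotropicalpluecker} singles out one of the three terms as the minimum and in particular forces that minimum to be attained twice, which is the ordinary three-term tropical Plücker relation. Hence every $\chi$-tropical Plücker vector is an ordinary one, and I may freely invoke the dictionary between cones of $\Dr(k,n)$ and regular matroid subdivisions of $\Delta_{k,n}$, restricting attention on each octahedral face $F_{L;\{a,b,c,d\}}$ to the splits permitted by $\chi$.

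The heart of the argument is the chirotropical cone lemma: if $\Pi=\{\pi_1,\dots,\pi_d\}\subseteq\Dr^\chi(k,n)$ satisfies $\pi_i+\pi_j\in\Dr^\chi(k,n)$ for all $i,j$, then $\operatorname{cone}(\Pi)\subseteq\Dr^\chi(k,n)$. As in \Cref{lemma:computationfulldr}, since $c_i\pi_i$ and $\pi_i$ induce the same subdivision it suffices to treat $\pi=\pi_1+\cdots+\pi_d$, and the subdivision induced by $\pi$ on any octahedral face is the common refinement of those induced by the summands. On a face $F_{L;\{a,b,c,d\}}$ carrying the $\chi$-relation, each $\pi_i$ induces either the trivial subdivision or one of the two splits allowed by \Cref{lemma:2outof3}. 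Here I would invoke the geometric fact that the three $2$-splits of an octahedron are pairwise incompatible: their splitting squares pass through the center and any two cross in the relative interior, so two distinct nontrivial splits cannot coexist in a single polyhedral subdivision. Consequently, were two summands $\pi_i,\pi_j$ to induce two different nontrivial splits on this face, their common refinement would fail to be a matroid subdivision, giving $\pi_i+\pi_j\notin\Dr(k,n)\supseteq\Dr^\chi(k,n)$ and contradicting pairwise compatibility. Thus all summands inducing a nontrivial split on the face induce the same one, which is $\chi$-allowed; the common refinement is that single split (or the trivial subdivision), so $\pi$ satisfies \eqref{eqn:chirotropicalpluecker} on every octahedral face and $\pi\in\Dr^\chi(k,n)$.

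With the cone lemma in hand, maximality and uniqueness follow formally, exactly as for \Cref{lem: full dressian cliques}. Any cone of the fan $\Dr^\chi(k,n)$ is generated by a subset of $R^\chi_{k,n}$ whose elements are pairwise compatible, because the sum of any two of its rays lies in the cone and hence in $\Dr^\chi(k,n)$; thus cones correspond to cliques of the compatibility graph on $R^\chi_{k,n}$. If $\Pi$ is a maximal-by-inclusion clique, then $\operatorname{cone}(\Pi)\subseteq\Dr^\chi(k,n)$ by the cone lemma, and it is maximal: any strictly larger cone of $\Dr^\chi(k,n)$ would contain some ray $\rho\in R^\chi_{k,n}\setminus\Pi$ with $\rho+\pi_i\in\Dr^\chi(k,n)$ for all $i$, contradicting maximality of $\Pi$. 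Conversely, a maximal cone $C$ has a unique set of primitive ray generators modulo lineality, these form a clique $\Pi_C$, and $\Pi_C$ is maximal, for otherwise the cone lemma would enlarge $C$ inside $\Dr^\chi(k,n)$. The assignments $\Pi\mapsto\operatorname{cone}(\Pi)$ and $C\mapsto\Pi_C$ are mutually inverse, which yields both the maximal-cone statement and the claimed uniqueness.

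The only genuinely nontrivial step is the cone lemma, and within it the single new ingredient beyond \Cref{lemma:computationfulldr} is the constraint supplied by \Cref{lemma:2outof3}: I must verify not merely that the refined subdivision is matroidal, but that the surviving split on each octahedral face is one of the two $\chi$-compatible ones. I expect the main obstacle to be articulating cleanly that pairwise compatibility already forbids two distinct nontrivial splits on a common face, via the pairwise incompatibility of octahedral $2$-splits, so that no new incompatibility beyond the ordinary Dressian can arise; once this is pinned down, the $\chi$-compatibility of the individual summands propagates automatically to the sum.
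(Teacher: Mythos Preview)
Your proposal is correct and follows essentially the same approach as the paper. The paper itself gives only a one-line justification, noting after \Cref{lemma:2outof3} that ``non-negative linear combinations of pairwise compatible chirotropical Plücker vectors is again a chirotropical Plücker vector'' and then declaring the theorem a direct consequence; your write-up simply spells out this chirotropical analogue of \Cref{lemma:computationfulldr} in detail and then reruns the clique-versus-cone argument of \Cref{lem: full dressian cliques}, exactly as intended.
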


    \section{Computing Dressians and Chirotropical Dressians}
    \label{sec:computing}
    
    	This section is devoted to algorithms for computing Dressians and chirotropical Dressians. This is based on the theoretical background of \Cref{lem: full dressian cliques} and \Cref{lem: full chirotropical dressian cliques}, which characterize maximal cones of the Dressians and of the chirotropical Dressians respectively.

	First, recall that $x \in \R^{\binom{n}{k}}$ belongs to the Dressian $\Dr(k,n)$ provided that the minimum in
	\begin{equation}
	\label{eqn:min+min-dr}
		\min_{\alpha \in \text{supp}(g)} \{\alpha \cdot x \}
	\end{equation}
 	is attained at least twice, for every 3-term Plücker relation $g \in P_{k,n}$. The finiteness of this set of inequalities allows us to construct a subroutine $\text{SatisfyEqn}$, with input a point $x \in \R^{\binom{n}{k}}$ and output \texttt{True} if $x$ satisfies the inequalities of $\Dr(k,n)$ given in \Cref{eqn:min+min-dr}, \texttt{False} otherwise.

	\begin{algorithm}
		\caption{Computation of the maximal cones of $\Dr(k,n)$ from the rays of $\Dr(k,n)$.}
		\label{algorithm-dr}
		
		\SetKwInOut{Input}{Input}
		\SetKwInOut{Output}{Output}
		\SetKwInOut{Subroutines}{Subroutines}
		\SetKwComment{Comment}{/ }{ }
		
		\Input{- $R$, the list of rays of $\Dr(k,n)$ up to lineality;}
		\Subroutines{- $\text{SatisfyEqn}$, with input a point $x \in \R^{\binom{n}{k}}$ and output \texttt{True} if $x$ satisfies the inequalities of $\Dr(k,n)$ given in \Cref{eqn:min+min-dr}, \texttt{False} otherwise; \newline
			- MaximalCliques, with input a graph $G$ and output the list of its maximal cliques;
		}
		\Output{- The list of maximal cones of $\Dr(k,n)$. Each maximal cone is given by the list of its rays up to lineality.
		}
		$\text{CompatiblePairs} \gets \{\{r_1,r_2\} \subset R \mid \text{SatisfyEqn}(r_1 + r_2) = \texttt{True}\}$ \;
		$G \gets $ Graph with vertex set $R$ and edge set CompatiblePairs \;
		$\text{Facets} \gets \text{MaximalCliques}(G)$ \Comment*[r]{facets of $\Dr(k,n)$}
		\Return $\text{Facets}$
	\end{algorithm}

	\Cref{algorithm-dr} computes maximal cones of the Dressian $\Dr(k,n)$. The input data is the list of rays of the full Dressian, together with a subroutine which checks if a point is in the Dressian and a subroutine which computes the maximal cliques of a given graph\footnote{We recall that a maximal clique of a graph $G = (V,E)$ is a subset $X \subset V$ with the property that the induced subgraph $G[X] \subset G$ is a complete graph, and which is maximal by inclusion with respect to this property.}. The procedure starts by constructing the list of compatible pairs. These are pairs of rays of $\Dr(k,n)$ whose sum belongs still to $\Dr(k,n)$. This is equivalent to taking the vector associated to the common refinement of the matroid subdivisions induced by the two tropical Plücker vectors. Afterwards, the algorithm constructs a graph $G$ with vertex set the list of rays and edges the collection of compatible rays of $\Dr(k,n)$. The facets of $\Dr(k,n)$, i.e.\ the maximal cones, are determined by the maximal cliques of $G$. The fact that every facet of $\Dr(k,n)$ is of this form is due to \Cref{lem: full dressian cliques}. At the level of tropical Plücker vectors, these are collections of rays which are pairwise compatible and maximal with respect to inclusion.
	
	Using the output of \Cref{algorithm-dr}, it is possible to compute the lower-dimensional cones of $\Dr(k,n)$. These are determined by intersecting pairs of maximal cones (by taking the common collection of rays among their generators), then each pair of maximal cones with another maximal cone, and so on.
 
	As in the previous section, the situation for chirotropical Dressians is analogous. Fix a chirotope $\chi \in \{\pm 1\}^{\binom{n}{k}}$. By \Cref{eqn:pluecker}, the $\chi$-tropical Plücker vectors are determined by checking every equality of the form given in \Cref{eqn:min+min-}. This reduces to checking an equation of the form
	\begin{equation}
		\label{eqn:min+min-2}
		\min_{\alpha \in \text{supp}^+(g)} \{ \alpha \cdot x \} = \min_{\alpha \in \text{supp}^-(g)} \{ \alpha \cdot x \}   
	\end{equation}
	for any 3-term Plücker relation $g \in P_{k,n}$, where $\text{supp}^\pm(g)$ is the collection of monomials $m \in \text{supp}(g)$ such that $\text{sign}(m(\chi)) = \pm 1$. The finiteness of this condition, once one manages to implement a method to partition $\text{supp}(g) = \text{supp}^+(g) \cup \text{supp}^-(g)$, makes it easy to check computationally. In particular, it allows us to construct a subroutine $\text{SatisfyEqn}^\chi$, with input a point $x \in \R^{\binom{n}{k}}$ and output \texttt{True} if $x$ satisfies the inequalities of $\Dr^\chi(k,n)$ given in \Cref{eqn:min+min-2}, \texttt{False} otherwise.
	
	\begin{algorithm}[H]
		\caption{Computation of rays and maximal cones of $\Dr^\chi(k,n)$ from rays of $\Dr(k,n)$.}
		\label{algorithm-chi}
		
		\SetKwInOut{Input}{Input}
		\SetKwInOut{Output}{Output}
		\SetKwInOut{Subroutines}{Subroutines}
		\SetKwComment{Comment}{/ }{ }
		
		\Input{- $R$, the list of rays of $\Dr(k,n)$ up to lineality;}
		\Subroutines{- $\text{SatisfyEqn}^\chi$, with input a point $x \in \R^{\binom{n}{k}}$ and output \texttt{True} if $x$ satisfies the inequalities of $\Dr^\chi(k,n)$ given in \Cref{eqn:min+min-2}, \texttt{False} otherwise; \newline
			- MaximalCliques, with input a graph $G$ and output the list of its maximal cliques;
		}
		\Output{- The list of rays and the list of maximal cones of $\Dr^\chi(k,n)$. Each maximal cone is given by the list of its rays up to lineality.
		}
		$R^\chi \gets $ $\{ r \in R \mid \text{SatisfyEqn}^\chi(r) = \texttt{True}\}$ \Comment*[r]{rays of $\Dr^\chi(k,n)$}
		$\text{CompatiblePairs}^\chi \gets \{\{r_1,r_2\} \subset R^\chi \mid \text{SatisfyEqn}^\chi(r_1 + r_2) = \texttt{True}\}$ \;
		$G^\chi \gets $ Graph with vertex set $R^\chi$ and edge set $\text{CompatiblePairs}^\chi$ \;
		$\text{Facets}^\chi \gets \text{MaximalCliques}(G^\chi)$ \Comment*[r]{facets of $\Dr^\chi(k,n)$}
		\Return $R^\chi$, $\text{Facets}^\chi$
	\end{algorithm}
	
	\Cref{algorithm-chi} computes the rays and the maximal cones of the chirotropical Dressian $\Dr^\chi(k,n)$. The input data is the list of rays of the full Dressian, together with a subroutine which checks every equation of the chirotropical Dressian and a subroutine which computes the maximal cliques of a given graph. The procedure starts by selecting the rays of $\Dr^\chi(k,n)$ among the rays of $\Dr(k,n)$. This is done simply by checking which rays satisfy \Cref{eqn:min+min-2} for every 3-term Plücker relation $g \in P_{k,n}$. Then, it constructs a list of $\chi$-compatible pairs. These are pairs of rays of $\Dr^\chi(k,n)$ whose sum belongs still to $\Dr^\chi(k,n)$. This is equivalent to taking the vector associated to the common refinement of the matroid subdivisions induced by the two $\chi$-tropical rays. Afterwards, the algorithm constructs a graph $G^\chi$ with vertex set the list of chirotropical rays and edges the collection of $\chi$-compatible rays of $\Dr^\chi(k,n)$. The facets of $\Dr^\chi(k,n)$, i.e.\ the maximal cones, are determined by the maximal cliques of $G^\chi$. The fact that every facet of $\Dr^\chi(k,n)$ is of this form is due to \Cref{lem: full chirotropical dressian cliques}. At the level of $\chi$-tropical Plücker vectors, these are collections of chirotropical rays which are pairwise $\chi$-compatible and maximal with respect to inclusion.

    \begin{remark}\label{rem: higher codim faces}
        As in the case of the full Dressian, the output of \Cref{algorithm-chi} can be used to compute the lower-dimensional cones of $\Dr^\chi(k,n)$. These are determined by intersecting pairs of maximal cones (by taking the common collection of rays among their generators), then each pair of maximal cones with another maximal cone, and so on. In our computations we realized that actually, in order to get the lower dimensional cones, only pairwise intersections of maximal cones are needed. This is summarized in the next section in \Cref{thm:grassmannian2determined}.
    \end{remark}

	\section{Realizability and computation of $\textnormal{Dr}^\chi(3,n)$, $n = 6,7,8$}\label{sec: computations}
        \label{sec:3678}

        As already noted in \Cref{note:+}, the positive Dressian is among the chirotropical Dressians. It is well known that the positive Dressian is realizable, i.e.\ equal to the positive tropical Grassmannian \cite{speyerwilliamspos2020}. It is a natural question to understand when the chirotropical Dressian equals the chirotropical Grassmannian. We begin this section with the proof of our main realizability result, conjectured in \cite{CEZ2024A2}. The computations in this proof were performed using \Cref{algorithm-chi}, which we introduced in the previous section.
	
	\begin{proof} (\Cref{thm:realizability-intro})
		When $n=6$ it is well-known \cite{speyersturmfels2004} that the Dressian $\text{Dr}(3,6)$ is set theoretically equal to the tropical Grassmannian $\text{Trop }\G(3,6)$; $n=7,8$ require an argument.  For $n=7$ we considered the interior of any of the seven-dimensional Fano cones. They are obtained from the cone whose rays are among the standard basis $e_{ijk}$ of $\R^{\binom{n}{3}}$ and are labeled by 3-subsets $ijk$ corresponding to the seven nonbases of the Fano matroid:
		\[
		167, \ 246, \ 356, \ 237, \ 457, \ 125,\ 134.
		\]
		This cone and each relabeling of it under the $S_7$ action on $\R^{\binom{7}{3}}$ mark the difference between $\Trop \ \G(3,7)$ and $\Dr(3,7)$. We checked that every cone of this form is not compatible with any of the 11 isomorphism classes of chirotopes $\chi$ reported in \cite{FinschiCatalog}, even though any of its six-dimensional facets is compatible with some chirotope. Next, we apply the result of \cite{BENDLE2024} for $\text{Dr}(3,8)$: as for $n=7$, it suffices to check that only the six-dimensional faces of the extended Fano cones are in any given chirotropical Dressian. We took all permutations under the symmetric group $S_8$ of a point in the relative interior of the Fano cone and found that none of them are compatible with any of the 135 isomorphism classes of chirotopes $\chi$ reported in \cite{FinschiCatalog}.
	\end{proof}
    
        \begin{remark}
        \label{rmk:covering}
            In the proof of \Cref{thm:realizability-intro}, we also checked that, for $n = 6, 7$, the tropical Grassmannian $\Trop \ \G(3,n)$ is covered by chirotropical Grassmannians $\Trop^\chi \G(3,n)$. We considered the isomorphism classes reported in \cite{FinschiCatalog} and took their relabelings via the symmetric group $S_n$ and their orbit under the action of the torus $\{ \pm 1 \}^n$. We obtained all rank three chirotopes on $n$ elements in this way. Then, by means of \Cref{algorithm-chi}, we checked that each pair of rays $\pi_i, \pi_j$ of the tropical Grassmannian such that $\pi_i + \pi_j \in \Trop \ \G(3,n)$ satisfies also that $\pi_i + \pi_j \in \Trop^\chi \G(3,n)$ for some chirotope $\chi$.
        \end{remark}
	
    We now compute all chirotropical moduli spaces  $\text{Trop}^\chi X(3,n)$ for $n=6,7,8$. In these cases, by \Cref{thm:realizability-intro}, we know that the chirotropical Grassmannian is equal to the chirotropical Dressian. Our implementation of \Cref{algorithm-dr} and \Cref{algorithm-chi} is in SageMath \cite{sagemath} and it is designed for any Dressian $\Dr(k,n)$ in characteristic zero.
	
	Through the rest of the section, for sake of compactness, we will denote a rank 3 chirotope by $\chi \in \{\pm\}^{\binom{n}{3}}$. We use the standard lexicographic order of the 3-subsets:
	\[
	123 < 124 < \cdots < 12n < \cdots < (n-2)(n-1)n.
	\]
	We show the results of our computations.
	\begin{theorem}[Case $(3,6)$]
		\label{thrm:36}
		For any of the 4 isomorphism classes of realizable uniform chirotopes $\chi \in \{\pm\}^{\binom{6}{3}}$ in \cite{FinschiCatalog}, the chirotropical Dressian $\Dr^\chi(3,6)$ modulo lineality is a pure $4$-dimensional polyhedral fan and it is equal to the chirotropical moduli space $\Trop^\chi X(3,6)$. These polyhedral fans have f-vectors:
		\begin{center}
			\begin{tabular}{|c|c|c|}
				\hline
				\# & $\chi$ & f-vector\\
				\hline
				1 & $(++++++++--+++++-++++)$ & $(15, 60, 90, 45)$ \\
				\hline
				2 & $(++++++++++++++++++--)$ & $(15, 60, 89, 44)$ \\
				\hline
				3 & $(+++++++++++++++++++-)$ & $(14, 55, 82, 41)$ \\
				\hline
				4 & $(++++++++++++++++++++)$ & $(16, 66, 98, 48)$ \\
				\hline
			\end{tabular}
		\end{center}
	\end{theorem}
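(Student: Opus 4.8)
The equality $\Dr^\chi(3,6) = \Trop^\chi \G(3,6)$ is already supplied by \Cref{thm:realizability}, so the remaining content of the statement is the structural claim that each object is a pure $4$-dimensional polyhedral fan together with the four explicit $f$-vectors. Both are finite computations carried out with \Cref{algorithm-chi}, and the first reduction is to observe that the $f$-vector is an invariant of the isomorphism class of $\chi$: relabeling by $\sigma \in S_6$ and reorientation by $t \in \{\pm 1\}^6$ induce a linear automorphism of $\R^{\binom{6}{3}}$ carrying $\Dr^\chi(3,6)$ isomorphically onto $\Dr^{\chi'}(3,6)$ and hence preserving the count of cones in each dimension. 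Thus it suffices to run the computation on the four catalog representatives of \cite{FinschiCatalog}.

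The computational pipeline is as follows. I would take as input the rays of $\Dr(3,6)$ modulo lineality (known from \cite{speyersturmfels2004}); for each representative $\chi$, apply \Cref{algorithm-chi} to extract the subset $R^\chi \subseteq R$ of rays satisfying the $\chi$-tropical Plücker relations, giving $f_1 = |R^\chi|$; build the $\chi$-compatibility graph $G^\chi$; and compute its maximal cliques, which by \Cref{lem: full chirotropical dressian cliques} are in bijection with the maximal cones and furnish the fan structure, giving $f_4$. Purity is then verified directly: for each maximal clique I would compute the rank, modulo the lineality space $L_{3,6}$, of the matrix whose rows are its ray generators, and check that this rank equals $4$ in every case. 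This rules out the a priori possibility that a maximal clique spans a cone of dimension less than $4$, which is exactly the phenomenon that would break purity.

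To obtain the intermediate entries $f_2$ and $f_3$ I would enumerate the lower-dimensional cones from the facet list. By the face-determination phenomenon recorded in \Cref{rem: higher codim faces}, every cone of $\Dr^\chi(3,6)$ arises as a pairwise intersection of maximal cones, so one forms all intersections of pairs of facets (by intersecting their ray sets), removes non-maximal repetitions, and sorts the resulting cones by the rank of their generators modulo $L_{3,6}$; counting the $2$- and $3$-dimensional cones then yields $f_2$ and $f_3$. The main point requiring care — and the only step that is not purely mechanical — is justifying the completeness of this enumeration: a priori a face of a polyhedral fan need not be an intersection of only two facets, so one must confirm (as observed computationally and asserted in \Cref{rem: higher codim faces}) that for $\Dr^\chi(3,6)$ no face is missed by restricting to pairwise intersections. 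Given the small size of the problem for $n=6$, the clique enumeration and the rank computations are entirely tractable, and cross-checking the totals against the known fan $\Trop \G(3,6)$, whose rays contain those of each $\Trop^\chi \G(3,6)$, provides an independent confirmation of the tabulated $f$-vectors.
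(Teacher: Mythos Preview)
Your proposal is correct and matches the paper's approach: the theorem is presented in the paper without a separate proof, as the output of exactly the computational pipeline you describe---invoke \Cref{thm:realizability} for the equality, run \Cref{algorithm-chi} on the rays of $\Dr(3,6)$ from \cite{speyersturmfels2004} for each of the four catalog representatives, read off the maximal cones via \Cref{lem: full chirotropical dressian cliques}, and recover the lower faces by the pairwise-intersection observation of \Cref{rem: higher codim faces}. If anything, you are more explicit than the paper about two points it leaves implicit: the isomorphism-invariance reduction to the four representatives, and the rank check certifying purity.
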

	
	\noindent From now on, we will denote a chirotope $\chi \in \{ \pm \}^{\binom{n}{3}}$ as the vector indexing $3$-subsets $ijk \subset [n]$ such that $\chi_{ijk} = -$. As an example, the chirotope:
	\[
	(++++++++++++++++++++++++++++++++++-) \in \{ \pm \}^{\binom{7}{3}}
	\]
	will be denoted by $(567)$. We reserve the notation $+$ for the totally positively oriented chirotope $(+\cdots+)$.
	
	\begin{theorem}[Case $(3,7)$]
		\label{thrm:37}
		For any of the 11 isomorphism classes of realizable uniform chirotopes $\chi \in \{\pm\}^{\binom{7}{3}}$ in \cite{FinschiCatalog}, the chirotropical Dressian $\Dr^\chi(3,7)$ modulo lineality is a pure $6$-dimensional polyhedral fan and it is equal to the chirotropical moduli space $\Trop^\chi X(3,7)$. These polyhedral fans have f-vectors:
		\begin{center}
			\begin{tabular}{|c|c|c|}
				\hline
				\# & $\chi$ & f-vector\\
				\hline
				1 & $(356,456,457,467)$ & $(30, 244, 864, 1513, 1287, 424)$ \\
				\hline
				2 & $(267,357,367,456,457,467,567)$ & $(31, 252, 892, 1565, 1335, 441)$ \\
				\hline
				3 & $(345,467,567)$ & $(28, 222, 781, 1373, 1179, 393)$ \\
				\hline
				4 & $(356,357,456,457)$ & $(39, 342, 1224, 2109, 1746, 558)$ \\
				\hline
				5 & $(356,456,457)$ & $(35, 298, 1073, 1885, 1597, 522)$ \\
				\hline
				6 & $(367,456,457,467,567)$ & $(34, 291, 1050, 1844, 1560, 509)$ \\
				\hline
				7 & $(367,457,467,567)$ & $(36, 311, 1125, 1974, 1665, 541)$ \\
				\hline
				8 & $(457,467,567)$ & $(30, 248, 891, 1577, 1351, 447)$ \\
				\hline
				9 & $(467,567)$ & $(37, 325, 1181, 2070, 1740, 563)$ \\
				\hline
				10 & $(567)$ & $(34, 296, 1084, 1922, 1634, 534)$ \\
				\hline
				11 & + & $(42, 392, 1463, 2583, 2163, 693)$ \\
				\hline
			\end{tabular}
		\end{center}
	\end{theorem}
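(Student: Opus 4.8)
The equality $\Dr^\chi(3,7) = \Trop^\chi \G(3,7)$ asserted here is exactly \Cref{thm:realizability} specialized to $n = 7$, so no separate argument is needed for that half of the statement; the remaining work is the explicit determination of the fan $\Dr^\chi(3,7)$ and its f-vector for each of the eleven chirotopes. I would first record the expected dimensions: the lineality space $L_{3,7}$ has dimension $n = 7$, and since $\dim \Trop\,\G(3,7) = k(n-k)+1 = 13$, the moduli space $\Trop\,X(3,7)$ has dimension $13 - 7 = (k-1)(n-k-1) = 6$. Thus the top dimension of $\Dr^\chi(3,7)$ modulo lineality is at most $6$, and the purity claim is that every maximal cone attains exactly this value. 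Because $\Dr^\chi(3,7)$ is only a subfan of $\Dr(3,7)$ (the $\chi$-tropical Plücker relations being strictly more restrictive than the tropical ones), purity is not automatic and must be checked.

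The computation itself is driven by \Cref{algorithm-chi}. As input I would take the complete list $R$ of rays of the full Dressian $\Dr(3,7)$ modulo lineality, available from prior computations of the Dressian. For each fixed $\chi$, the first loop selects the subset $R^\chi \subset R$ of rays satisfying the $\chi$-tropical Plücker relations through the subroutine $\text{SatisfyEqn}^\chi$; the cardinality $\lvert R^\chi\rvert$ is the first entry $f_1$ of the f-vector. Building the compatibility graph $G^\chi$ on $R^\chi$ — joining $\pi_i,\pi_j$ by an edge precisely when $\pi_i + \pi_j \in \Dr^\chi(3,7)$ — and enumerating its maximal cliques then yields, by \Cref{lem: full chirotropical dressian cliques}, exactly the maximal six-dimensional cones of $\Dr^\chi(3,7)$; their count is the last entry $f_6$. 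The force of \Cref{lem: full chirotropical dressian cliques} is that this correspondence is a bijection, so the clique enumeration can neither miss nor overcount maximal cones.

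To obtain the intermediate entries $f_2,\ldots,f_5$ I would generate the lower-dimensional faces as intersections of facets, recorded as common subsets of generating rays; by \Cref{rem: higher codim faces} only pairwise intersections of facets are needed in these cases, which keeps the face lattice tractable. After de-duplicating faces and grouping them by dimension, the full f-vector is read off. Purity is then verified directly: for each maximal clique I would confirm that its generators span a cone of dimension $6$ modulo $L_{3,7}$, i.e.\ that the image of their linear span in the quotient by the lineality has dimension exactly $6$. Establishing this for every clique across all eleven chirotopes shows that each $\Dr^\chi(3,7)$ is pure of dimension $6$.

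The main obstacle is computational rather than conceptual. First, the correctness of the entire output rests on the input ray list of $\Dr(3,7)$ being complete, and on the maximal-clique enumeration scaling to the graphs $G^\chi$, which for $n = 7$ are large enough that naive clique finding is the genuine bottleneck. Second, although \Cref{lem: full chirotropical dressian cliques} certifies the top cones abstractly, I would still need to confirm computationally that the reconstructed lower faces form a genuine face lattice and that the pairwise-intersection shortcut of \Cref{rem: higher codim faces} recovers every face of lower dimension; a failure there would corrupt the intermediate entries $f_2,\ldots,f_5$ while leaving $f_1$ and $f_6$ correct, so this internal consistency check is the delicate part of the verification.
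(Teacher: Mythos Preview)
Your proposal is correct and follows essentially the same approach as the paper: the equality is imported from \Cref{thm:realizability}, and the f-vectors are obtained by running \Cref{algorithm-chi} on the rays of $\Dr(3,7)$, invoking \Cref{lem: full chirotropical dressian cliques} for the maximal cones and the pairwise-intersection shortcut of \Cref{rem: higher codim faces} for the lower faces. The paper does not provide a separate proof block for this theorem---it is stated as a computational result---so your write-up is in fact more explicit about the verification steps (purity, face-lattice consistency) than the paper itself.
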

	
	\begin{theorem}[Case $(3,8)$]
		\label{thrm:38}
		For any of the 135 isomorphism classes of realizable uniform chirotopes $\chi \in \{\pm\}^{\binom{8}{3}}$ in \cite{FinschiCatalog}, the chirotropical Dressian $\Dr^\chi(3,8)$ modulo lineality is a pure $8$-dimensional polyhedral fan and it is equal to the chirotropical moduli space $\Trop^\chi X(3,8)$. The f-vectors of these polyhedral fans can be found in \cite{datachirotropicalization20241}.
	\end{theorem}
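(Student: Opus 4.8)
The plan is to treat this as a computational theorem and to establish it by running \Cref{algorithm-chi} once for each of the 135 isomorphism classes of chirotopes $\chi \in \{\pm\}^{\binom{8}{3}}$ recorded in \cite{FinschiCatalog}. The equality $\Dr^\chi(3,8) = \Trop^\chi \G(3,8)$ is \emph{not} a new assertion here: it is exactly the content of \Cref{thm:realizability} specialized to $n = 8$, so I may invoke that result directly. What genuinely remains is to (i) produce each fan $\Dr^\chi(3,8)$ explicitly as a list of rays together with its maximal cones, (ii) verify that the fan is pure of the expected dimension, and (iii) read off its f-vector.

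First I would feed in, as the common input $R$, the rays of the full Dressian $\Dr(3,8)$ modulo lineality, taken from the computation of \cite{BENDLE2024}. For each fixed $\chi$, \Cref{algorithm-chi} selects the subcollection $R^\chi \subseteq R$ of rays satisfying the $\chi$-tropical Plücker relations through the subroutine $\text{SatisfyEqn}^\chi$, builds the compatibility graph $G^\chi$ on $R^\chi$ whose edges are the $\chi$-compatible pairs, and returns its maximal cliques. By \Cref{lem: full chirotropical dressian cliques}, these maximal cliques are in bijection with the maximal cones of $\Dr^\chi(3,8)$, so this step delivers the fan exactly; no further matroid-subdivision bookkeeping is required beyond the pairwise compatibility checks.

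Next I would verify purity, which is the one genuinely geometric assertion in the statement. Since the chirotropical moduli space $\Trop^\chi X(3,8)$ is expected to be pure of dimension $(k-1)(n-k-1) = 2 \cdot 4 = 8$, I would compute, for every maximal cone returned above, the rank of its spanning rays modulo the lineality space $L_{3,8}$ and check that this rank equals $8$ in every case; a priori a maximal clique could span a proper subspace and yield a lower-dimensional maximal cone, so this check is not automatic. Finally, to obtain the f-vector I would reconstruct the lower-dimensional faces by intersecting facets, taking common collections of rays among their generators; by \Cref{rem: higher codim faces} only pairwise intersections of facets are needed, which keeps the face-lattice reconstruction tractable. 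The resulting f-vectors are tabulated in \cite{datachirotropicalization20241}.

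The main obstacle is computational scale rather than conceptual difficulty. We work in $\R^{\binom{8}{3}} = \R^{56}$, the Dressian $\Dr(3,8)$ has a large number of rays, and both the maximal-clique enumeration on $G^\chi$ and the subsequent face-lattice reconstruction must be carried out for all 135 chirotopes, each of which has a large number of maximal cones (as the f-vectors in \cite{datachirotropicalization20241} make clear). Keeping the clique enumeration and the rank computations within reach, and organizing and recording the output reliably across all 135 cases, is where the real effort lies.
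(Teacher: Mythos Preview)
Your proposal is correct and matches the paper's approach essentially line for line: the paper gives no separate proof for this theorem but treats it as the output of exactly the pipeline you describe --- invoke \Cref{thm:realizability} for the equality, run \Cref{algorithm-chi} on the rays of $\Dr(3,8)$ (with the $(3,8)$ ray data acknowledged as coming from \cite{BENDLE2024}), read off maximal cones as maximal cliques via \Cref{lem: full chirotropical dressian cliques}, and recover lower faces by pairwise intersection as in \Cref{rem: higher codim faces}. Your explicit rank-check for purity is a sensible addition that the paper leaves implicit in the reported f-vectors.
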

	
	\begin{remark}
		The f-vectors of the positive parts, i.e.\ corresponding to the totally positive chirotope $+$, agree with those of the totally positive Tropical Grassmannians already known in the literature. For example, for the cases $(3,6)$ and $(3,7)$, they agree with the results of Speyer and Williams \cite{speyerwilliams2004}, and for the case $(3,8)$ they are equal to the f-vectors obtained by Bendle, Böhm, Ren and Schröter \cite{BENDLE2024}. The number of maximal cones, that is, the last entries in the f-vectors, coincide with the results reported in \cite{CEZ2024A2} for the numbers of Generalized Feynman Diagrams (GFD) for all $4,11,135$ types of chirotopal tropical Grassmannians.
	\end{remark}
	
	As already discussed in \Cref{rem: higher codim faces}, we verified that, in order to get the lower dimensional cones from the maximal cones, it suffices to intersect only pairs of maximal cones. This gives us the following result.
	
	\begin{theorem}
		\label{thm:grassmannian2determined}
        For $n=6,7,8$, for any chirotope $\chi$ of rank 3, the chirotropical moduli space $\Trop^\chi X(3,n)$ has the following property: every non-maximal cone can be expressed as the intersection of two maximal cones.
	\end{theorem}
 
    \section{The chirotopal configuration spaces $X^\chi (3,6)$ are polytopal}
    \label{sec:36polytopal}
    
	In this section, we present a key result about chirotopal configuration spaces $X^\chi(3,6)$. In \Cref{rmk:covering}, we explained how to obtain all 372 rank 3 uniform realizable chirotopes on 6 elements from representatives of their isomorphism classes; for each of these, we exhibit a birational map $\mathbb{R}^4 \dashrightarrow X(3,6)$ which restricts to a diffeomorphism $\mathbb{R}^4_{>0} \rightarrow X^\chi(3,6)$, and we propose a canonical form in the sense of positive geometry \cite{ABL}. 
    
	\begin{theorem}
		Each chirotopal configuration space $X^\chi(3,6)$ is diffeomorphic to a polytope.
	\end{theorem}
    In the course of the proof, for each chirotope we propose a canonical differential form, which will imply that $X(3,6)$ is tiled with positive geometries.  
    The derivation of the parameterizations and canonical differential forms borrows techniques from positive del Pezzo moduli spaces \cite{EAPSY} and is somewhat beyond the scope of the paper to explain in detail; we refer to it for context and motivation.
    
	\begin{proof}
        We exhibit parameterizations for one representative from each of the four isomorphism classes of chirotopes.  In each case, we checked that the normal fan of the Newton polytope of the product of all irreducible polynomials occurring in the minors is isomorphic to the fan which we computed in \Cref{thrm:36}. 

        The totally positively oriented chirotope $+$ of type 4 in \Cref{thrm:36} admits the following parameterization:
		$$
\begin{bmatrix}
 1 & 0 & 0 & y_1 y_3 & y_1 y_3+y_1 y_4+y_2 y_4 & y_3 y_1+y_4 y_1+y_1+y_2+y_2 y_4+1 \\
 0 & 1 & 0 & -y_3 & -y_3-y_4 & -y_3-y_4-1 \\
 0 & 0 & 1 & 1 & 1 & 1 \\
\end{bmatrix}.
		$$
        We can solve explicitly for the parameters $y_i$:
        $$\left(\frac{p_{145} p_{156} p_{234}}{p_{125} p_{134} p_{456}},\frac{p_{124} p_{156} p_{345}}{p_{125} p_{134} p_{456}},\frac{p_{125} p_{126} p_{134}}{p_{123} p_{124} p_{156}},\frac{p_{126} p_{145}}{p_{124} p_{156}}\right) = (y_1,y_2,y_3,y_4).$$
		The type 3 chirotope in \Cref{thrm:36} with $\chi_{456}=-1$ admits the following parameterization:
		$$
		\begin{bmatrix}
			1 & 0 & 0 & 1 & \frac{y_1+1}{y_1} & \frac{y_1 y_3+y_3+1}{y_1 y_3} \\
			0 & 1 & 0 & -1 & -\frac{\left(y_1+1\right) \left(y_2 y_4+y_4+1\right)}{y_2 y_4 y_1+y_4 y_1+y_1+y_4+1} & -\frac{\left(y_1 y_3 y_2+y_3 y_2+y_2+y_1 y_3+y_3\right) \left(y_2 y_4+y_4+1\right)}{\left(y_2+1\right) y_3 \left(y_2 y_4 y_1+y_4 y_1+y_1+y_4+1\right)} \\
			0 & 0 & 1 & 1 & 1 & 1 \\
		\end{bmatrix}
		.$$
		We can solve explicitly for the parameters $y_{i}$:
		$$\left(\frac{p_{125} p_{234}}{p_{123} p_{245}},\frac{p_{156} p_{235}}{p_{125} p_{356}},\frac{p_{126} p_{245}}{p_{124} p_{256}},-\frac{p_{145} p_{356}}{p_{135} p_{456}}\right) = (y_1,y_2,y_3,y_4).$$
            The type 2 chirotope in \Cref{thrm:36} with
		$$\chi _{134}=\chi _{135}=\chi _{136}=\chi _{235}=\chi _{236}=\chi _{245}=\chi _{246}=\chi _{256} = -1$$
		admits the following parameterization:
		$$
		\begin{bmatrix}
			1 & 0 & 0 & 1 & -\frac{1}{y_1} & -\frac{y_1 y_2 y_3+y_2 y_3+y_1 y_2 y_4 y_3+y_2 y_4 y_3+y_4 y_3+y_3+y_1 y_2 y_4+y_2 y_4+y_4+1}{y_1 \left(y_3+1\right) \left(y_1 y_2 y_4+y_2 y_4+y_4+1\right)} \\
			0 & 1 & 0 & 1 & \frac{y_2}{y_2+1} & \frac{y_2}{\left(y_2+1\right) \left(y_3+1\right)} \\
			0 & 0 & 1 & 1 & 1 & 1 \\
		\end{bmatrix}.
		$$
		We can solve for the parameters $y_i$:
		$$\left(-\frac{p_{125} p_{234}}{p_{124} p_{235}},-\frac{p_{124} p_{135}}{p_{123} p_{145}},-\frac{p_{123} p_{156}}{p_{125} p_{136}},\frac{p_{235} p_{456}}{p_{256} p_{345}}\right) = (y_1,y_2,y_3,y_4).$$
		The type 1 chirotope in \Cref{thrm:36} with:
		$$\chi _{134}=\chi _{135}=\chi _{145}=\chi _{235}=\chi _{245}=\chi _{346}=\chi _{356}=-1$$

		admits the following parameterization:
		$$
		\begin{bmatrix}
			1 & 0 & 0 & 1 & -\frac{1}{y_3} & \frac{y_2 y_3+y_3+1}{y_2 y_3} \\
			0 & 1 & 0 & 1 & \frac{\left(y_1 y_2+y_2+1\right) \left(y_4+1\right)}{y_1 y_2 y_4+y_2 y_4+y_4+1} & -\frac{y_1 \left(y_2 y_3+y_3+1\right) \left(y_4+1\right)}{y_2 y_3+y_2 y_4 y_3+y_4 y_3+y_3+y_1 y_2 y_4+y_2 y_4+y_4+1} \\
			0 & 0 & 1 & 1 & 1 & 1 \\
		\end{bmatrix}.
		$$
		We can solve for the $y_i$ parameters:
        \[
            \left(-\frac{p_{124} p_{136} p_{256} p_{345}}{p_{126} p_{134} p_{245} p_{356}},-\frac{p_{126} p_{245}}{p_{125} p_{246}},-\frac{p_{125} p_{234}}{p_{124} p_{235}},-\frac{p_{125} p_{134} p_{236} p_{456}}{p_{123} p_{145} p_{256} p_{346}}\right)\\
            = (y_1,y_2,y_3,y_4).
        \]
            For each chirotope $\chi$, the canonical form on $X^\chi(3,6)$ is given by the wedge of dlogs of these four cross-ratios:
		$$\Omega^\chi = d\log(y_1)\wedge d\log(y_2)\wedge d\log(y_3) \wedge d\log(y_4).$$
        which we abbreviate by $\Omega_1,\Omega_2,\Omega_3,\Omega_4$.
        These evaluate to the following rational functions:
        \begin{itemize}
        \item Type 4 chirotope:
        $$\Omega_4 = \frac{1}{p_{123} p_{234} p_{345} p_{456} p_{561} p_{612}}d\mathbf{y},$$
        \item Type 3 chirotope:
        $$\Omega_3=\frac{1}{p_{123} p_{126} p_{145} p_{234} p_{356} p_{456}}d\mathbf{y},$$
        \item Type 2 chirotope: $$\Omega_2 = 
        \frac{p_{245}}{p_{124} p_{136} p_{145} p_{234} p_{235} p_{256} p_{456}}
        d\mathbf{y},$$
        \item Type 1 chirotope:
        $$ \Omega_1= \frac{p_{123} p_{345} p_{156} p_{246}-p_{234} p_{456} p_{126} p_{135}}{p_{125} p_{126} p_{134} p_{136} p_{145} p_{234} p_{235} p_{246} p_{356} p_{456}}
        d\mathbf{y}.$$
        \end{itemize}
     \end{proof}
    The rational functions multiplying the volume form $d\mathbf{y} = dy_1\wedge dy_2 \wedge dy_3 \wedge dy_4$ in these four canonical differential forms that we propose coincide (up to relabeling) with Equations 3.15, 3.16, 3.17 and 3.26 in \cite{CEZ2024A4B}.  These, in turn, appeared in \cite[Section 2]{CEZ2024A2}, relating the CEGM integral to the cone-by-cone Laplace transform of the chirotropical moduli spaces $\text{Trop}^\chi X(3,6)$ which we have computed in this work.

        Given such diffeomorphisms as above, one can directly construct a parameterization of the chirotropical Grassmannian and a Global Schwinger Parameterization \cite{CE2024} of the corresponding generalized biadjoint scalar amplitude.  Our impression is that a similar formulation of parameterizations  can be done in the case of $X^\chi(3,7)$ and $X^\chi(3,8)$. Motivated by this and by \Cref{thm:realizability-intro}, we propose the following conjecture.

        \begin{conjecture}
        If there exists a birational map $\R^{(k-1)(n-k-1)} \dashrightarrow X(k,n)$ which restricts to a diffeomorphism $\R_{>0}^{(k-1)(n-k-1)} \to X^\chi(k,n)$, then the equality of sets $\Trop^\chi \G(k,n) = \Dr^\chi(k,n)$ holds true.
	\end{conjecture}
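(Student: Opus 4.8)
The plan is to prove the nontrivial inclusion $\Dr^\chi(k,n) \subseteq \Trop^\chi \G(k,n)$, since the reverse inclusion holds automatically: the chirotropical Grassmannian is cut out by the hypersurfaces $\Trop^\chi V(g)$ for \emph{all} $g$ in the Plücker ideal $I_{k,n}$, while the chirotropical Dressian imposes only the $3$-term relations $P_{k,n}\subset I_{k,n}$. Thus realizability is a lifting statement: every vector $\pi$ satisfying the $\chi$-tropical Plücker relations must arise as the vector of valuations of an honest point of the real Grassmannian whose Plücker coordinates carry the sign pattern $\chi$.

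To make lifting precise I would work over the real Puiseux field $K=\R\{\{t\}\}$, which is real closed and carries both an order (the sign of the lowest-order coefficient) and the valuation $\mathrm{val}$. By a real/signed version of the Fundamental Theorem of tropical geometry, $\pi\in\Trop^\chi\G(k,n)$ exactly when there exists $g\in\G(k,n)(K)$ with $\mathrm{sign}\,p_I(g)=\chi_I$ and $\mathrm{val}\,p_I(g)=\pi_I$ for all $I$. The hypothesis supplies the machine that produces such $g$: the birational map $\phi\colon\R^{(k-1)(n-k-1)}\dashrightarrow X(k,n)$ is defined over $\R$, so it extends to $\phi_K$ over $K$, and by the Tarski--Seidenberg transfer principle the first-order statement ``$\mathrm{sign}\,p_I(\phi(y))=\chi_I$ for all $y$ in the positive orthant'' persists over the real closed extension $K$. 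Hence every positive Puiseux point $y\in K_{>0}^{\,(k-1)(n-k-1)}$ yields a lift of its image under $\mathrm{Trop}\,\phi$ into $\Trop^\chi\G(k,n)$.

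The core of the argument is then surjectivity of the induced tropical map. Writing $d=(k-1)(n-k-1)$ and $P_I=p_I\circ\phi$, I would substitute $y_i=t^{w_i}$ and study the piecewise-linear map
\[
\mathrm{Trop}\,\phi\colon \R^{d}\longrightarrow \R^{\binom{n}{k}}/L_{k,n},\qquad w\longmapsto \big(\mathrm{val}\,P_I(t^{w})\big)_I .
\]
Its image lies in $\Trop^\chi\G(k,n)$ by the previous paragraph, and since $\dim\Dr^\chi(k,n)=d$ modulo lineality (consistent with the pure dimensions recorded in \Cref{thrm:36}), it suffices to show $\mathrm{Trop}\,\phi$ surjects onto $\Dr^\chi(k,n)/L_{k,n}$. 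As $\mathrm{Trop}\,\phi$ is piecewise linear and sends cones to cones, this reduces to checking that it meets the relative interior of every maximal cone of $\Dr^\chi(k,n)$, equivalently that the normal fan of the Newton polytope of $\prod_I P_I$ coincides with the fan cut out by the $\chi$-tropical Plücker relations. This is precisely the combinatorial coincidence verified by direct computation for $(k,n)=(3,6)$ in the proof of the polytopality theorem, and the conjecture asserts it in general.

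The main obstacle is controlling cancellation when tropicalizing $P_I$: the identity $\mathrm{val}\,P_I(t^{w})=\mathrm{Trop}(P_I)(w)$ can fail if the lowest-order terms of the signed polynomial $P_I$ cancel along a positive ray, in which case the image of $\mathrm{Trop}\,\phi$ could be strictly smaller than $\Dr^\chi(k,n)$. The clean regime is when $\chi_I\,P_I$ is a subtraction-free rational function of $y$; this is exactly what Lusztig/cluster positivity provides in the totally positive case, and it is manifestly true in each of the four explicit $(3,6)$ parameterizations, where every matrix entry is a quotient of polynomials with positive coefficients. I therefore expect the hard part of a uniform proof to be showing that the diffeomorphism hypothesis alone forces $\chi_I\,P_I$ to be subtraction-free—equivalently, that no spurious leading-term cancellation occurs along positive Puiseux rays. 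Granting this, tropicalization is transparent and the argument closes as above, realizing the conjecture as the signed, $\chi$-dependent analog of the theorem that the positive Dressian equals the positive tropical Grassmannian.
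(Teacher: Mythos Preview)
The paper records this statement as an open \emph{conjecture} and offers no proof; there is nothing to compare your proposal against. What you have written is a coherent strategy rather than a proof, and you are candid about this, but the acknowledged gap is more serious than you indicate, and there is a second gap you do not flag.

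First, the step ``the diffeomorphism hypothesis alone forces $\chi_I\,P_I$ to be subtraction-free'' is not merely hard; as stated it is likely false. The hypothesis tells you only that $\chi_I\,P_I(y)>0$ for every $y\in\R_{>0}^d$, and positivity on the open orthant does not imply a subtraction-free expression: already $y_1^2 - y_1 y_2 + y_2^2$ is strictly positive on $\R_{>0}^2$ but has no subtraction-free rewriting as a polynomial. So leading-term cancellation in $P_I(t^{w})$ cannot be ruled out by the bare diffeomorphism hypothesis, and your lifting map $w\mapsto\big(\mathrm{val}\,P_I(t^{w})\big)_I$ may fail to agree with $\mathrm{Trop}(P_I)$ on entire cones, not just on measure-zero walls. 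In the positive case this obstacle is removed by cluster positivity, which is genuine extra structure beyond the existence of \emph{some} diffeomorphism; for general $\chi$ you would need an analogous positivity theorem, and none is currently available.

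Second, even granting subtraction-freeness, you have not closed the surjectivity step. You reduce it to the assertion that the normal fan of the Newton polytope of $\prod_I P_I$ coincides with the fan $\Dr^\chi(k,n)/L_{k,n}$, and then say ``the conjecture asserts it in general.'' But the conjecture asserts the \emph{conclusion} $\Trop^\chi\G(k,n)=\Dr^\chi(k,n)$, not this Newton-polytope coincidence; you have reformulated the problem, not solved it. Relatedly, your appeal to $\dim\Dr^\chi(k,n)=d$ modulo lineality is unjustified: the paper's own $(4,8)$ example exhibits a $12$-dimensional cone in a chirotropical Dressian while $d=9$, so the equality of dimensions must itself be deduced from the diffeomorphism hypothesis, and you give no argument for that implication.
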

	
	\section{Availability of the code}
        \label{sec:code}
	
	All the code and the results of the computations, stored as SageMath objects, are available in a Zenodo page at \cite{datachirotropicalization20241}. The material in the page includes:
	\begin{itemize}
		\item The implementation of \Cref{algorithm-chi} in SageMath;
		\item The implementation of an algorithm to generate all the Plücker relations in SageMath;
		\item Plücker relations and 3-term Plücker relations for the cases $(3,6)$, $(3,7)$ and $(3,8)$;
		\item The list of chirotope isomorphism classes in \cite{FinschiCatalog} for the cases $(3,6)$, $(3,7)$ and $(3,8)$;
		\item List of rays of the chirotropical Dressians (equal to the corresponding chirotropical Grassmannians) modulo lineality in the cases $(3,6)$, $(3,7)$ and $(3,8)$. We gratefully acknowledge the authors of \cite{BENDLE2024} for making their data on the tropical Grassmanniann $\Trop \ \G(3,8)$ publicly available;
		\item All the chirotropical Dressians (equal to the chirotropical Grassmannians) in the cases $(3,6)$, $(3,7)$ and $(3,8)$. A face of dimension $i$ is stored as a list of at least $i$ rays among the rays of the original tropicalization whose positive hull is equal to that face; for any $n = 6,7,8$, the full chirotropical Dressian $\Dr^\chi(3,n)$ is saved as a Python dictionary $d^\chi$ with keys $i = 1, \ldots, 2(n-4)$ such that $d^\chi[i]$ is the list of all faces of dimension $i$ for all $i \in [2(n-4)]$;
		\item A list of the f-vectors of all chirotropical Dressians $\Dr^\chi(3,8)$. The user can either choose a chirotope and obtain the f-vector, or choose a chirotope from the list of chirotopes and obtain the f-vectors as the element with the same index in the list of all f-vectors.
	\end{itemize}

	\section{Discussions and Future Work}
        \label{sec:futurework}
	
	It is an important problem to generalize our work in the context of the full real tropicalization of the Grassmannian using Puiseux series, see for example \cite{BENDLE2024}.  Some remaining questions are the following: what is the fan structure on a chirotropical Grassmannian?  Is the real tropical Grassmannian covered by chirotropical Grassmannians? The analogous question was confirmed in \cite{EAPSY} for the moduli space of del Pezzo surfaces $Y(3,6)$.
	
	Our work opens many questions. One of these concerns the realizability of the chirotropical Dressian. We begin with an example which shows that for rank four chirotopes, in general the chirotropical Dressian is not equal to the chirotropical Grassmannian.  The same question for rank three chirotopes remains open.
	
	In \cite{CEZ2024A2}, rank $k$ chirotopes were encoded by $\binom{n}{k-2}$-element collections of rank two chirotopes subject to certain compatibility conditions, and were called Generalized Color Orders (GCOs); now rank two chirotopes on $n$ labels modulo reorientation are well-known to be equivalent to dihedral orders on $[n]$, see for example \cite{bjorner}.  In what follows, using data from \cite{CEZ2024A2}, we present a rank four chirotope $\chi$ on eight elements, such that $\text{Dr}^\chi(4,8)$ contains a nonrealizable cone.  In the table which follows, each 6-tuple is considered up to cyclic permutation and reflection.  
	$$
	\begin{array}{cccccccc}
		& 345687 & 245867 & 238567 & 238476 & 257438 & 234658 & 263547 \\
		345687 &  & 148567 & 138576 & 167438 & 154738 & 136458 & 165347 \\
		245867 & 148567 &  & 128756 & 164728 & 127458 & 126548 & 152476 \\
		238567 & 138576 & 128756 &  & 127368 & 172358 & 162538 & 123765 \\
		238476 & 167438 & 164728 & 127368 &  & 123487 & 124386 & 132764 \\
		257438 & 154738 & 127458 & 172358 & 123487 &  & 142385 & 127543 \\
		234658 & 136458 & 126548 & 162538 & 124386 & 142385 &  & 123456 \\
		263547 & 165347 & 152476 & 123765 & 132764 & 127543 & 123456 &  \\
	\end{array}
	$$
	This is equivalent to the chirotope:
	$$
	\begin{array}{cccccccccccccc}
		1 & 1 & 1 & 1 & 1 & 1 & 1 & 1 & 1 & 1 & 1 & 1 & 1 & 1 \\
		-1 & 1 & 1 & 1 & 1 & 1 & 1 & 1 & 1 & -1 & -1 & 1 & 1 & -1 \\
		1 & -1 & -1 & -1 & -1 & -1 & -1 & 1 & 1 & 1 & 1 & 1 & 1 & -1 \\
		1 & -1 & -1 & 1 & 1 & -1 & -1 & -1 & -1 & -1 & -1 & -1 & -1 & 1 \\
		-1 & -1 & -1 & -1 & -1 & -1 & -1 & -1 & -1 & -1 & -1 & -1 & -1 & -1 \\
	\end{array}
	$$
	Here the rows are ordered lexicographically, starting with
    \[ \chi_{1234},\chi_{1278},\chi_{1467},\chi_{2367},\chi_{3457},
    \]
    respectively. We leave it to the reader to compute the simplicial cells in the (any) generic hyperplane arrangement in $\mathbb{P}^3$ determined by this GCO; the result of that computation is the following list of 20 tetrahedra:
	$$
	\begin{array}{ccccc}
		1234 & 1237 & 1256 & 1268 & 1278 \\
		1358 & 1368 & 1458 & 1467 & 1567 \\
		2348 & 2358 & 2367 & 2457 & 2467 \\
		3456 & 3457 & 3478 & 4568 & 5678 \\
	\end{array}.
	$$
	
	One can verify that each canonical basis vector $e_J \in \text{Dr}^\chi(4,8)$ for each $J = j_1j_2j_3j_4$ in the table, so we can tabulate maximal cliques and thereby produce cones of chirotropical Plücker vectors.  That calculation reveals that $\text{Dr}^\chi(4,8)$ contains a cone of dimension at least 12 which contains rays generated by the following 12 linearly independent vectors:
	$$
	\begin{array}{cccccc}
		e_{1234} & e_{1256} & e_{1278} & e_{1368} & e_{1458} & e_{1467} \\
		e_{2358} & e_{2367} & e_{2457} & e_{3456} & e_{3478} & e_{5678} \\
	\end{array}.
	$$
	But the dimension of $\text{Trop }X(4,8)$ is 9, hence the relative interior of this 12-dimensional cone cannot be realizable.

        Finally, another important aspect which may be further investigated concerns the generalization of the chirotropical Grassmannian to arbitrary (in principle, non-realizable and non-uniform) chirotopes, in the sense of \cite{bjorner}.
	
	\section{Acknowledgements}
    We thank Freddy Cachazo, Matteo Gallet, Alessandro Oneto, David Speyer and Yong Zhang for fruitful discussions, and Yassine El Maazouz and Lakshmi Ramesh for helpful comments on the manuscript.  We are particularly grateful to Bernd Sturmfels for encouragement and support during the early stages of the project. This started when D.\ A.\ was a visiting undergraduate at the Max Planck Institute for Mathematics in the Sciences in Leipzig, for the traineeship of his master's degree course at the University of Trieste. D.\ A.\ is grateful to Rainer Sinn for support and to the institute for hospitality during the visiting period. D.\ A.\ is a member of GNSAGA (INdAM) and was supported by the University College of Excellence "Luciano Fonda" during the traineeship. N.\ E.\ was funded by the European Union (ERC, UNIVERSE PLUS, 101118787). Views and opinions expressed are however those of the author(s) only and do not necessarily reflect those of the European Union or the European Research Council Executive Agency. Neither the European Union nor the granting authority can be held responsible for them.
	
	\begin{small}
		
	\end{small}
	
\end{document}